\renewcommand{\le}{\varleq}
\renewcommand{\ge}{\vargeq}
\renewcommand{\le}{\leq}
\renewcommand{\ge}{\geq}
\newcommand{\alignb}{\[\begin{aligned} }
\newcommand{\alignn}{ \end{aligned}\]}
\newcommand\mL{L\kern-0.08cm\char39}
\newcommand{\seb}{\{\,}
\newcommand{\sen}{\,\}}
\newcommand{\clos}[1]{\mkern 1.5mu\overline{\mkern-1.5mu#1\mkern-1.5mu}\mkern 1.5mu}
\newcommand{\Mcal}{\mathcal{M}}
\newcommand{\kuu}{\emptyset}
\newcommand{\nekuu}{\neq \kuu}
\newcommand{\iskuu}{= \kuu}
\newcommand{\pdirectional}{\raise0.05em\hbox{$+$}directional}
\newcommand{\pdirectionality}{\raise0.05em\hbox{$+$}directionality}
\newcommand{\pdirectionalitys}{\raise0.05em\hbox{$+$}directionality }
\newcommand{\pdirectionals}{\raise0.05em\hbox{$+$}directional }
\newcommand{\mdirectional}{\raise0.05em\hbox{$-$}directional}
\newcommand{\mdirectionality}{\raise0.05em\hbox{$-$}directionality}
\newcommand{\mdirectionalitys}{\raise0.05em\hbox{$-$}directionality }
\newcommand{\mdirectionals}{\raise0.05em\hbox{$-$}directional }
\newcommand{\Z}{\mathbb{Z}}
\newcommand{\R}{\mathbb{R}}
\newcommand{\bi}{\in \Z}
\newcommand{\ep}{\varepsilon}
\newcommand{\centb}{\begin{center}}
\newcommand{\centn}{\end{center}}
\newcommand{\enumb}{\begin{enumerate}}
\newcommand{\enumn}{\end{enumerate}}
\newcommand{\itemb}{\begin{itemize}}
\newcommand{\itemn}{\end{itemize}}
\numberwithin{equation}{section}
\setlist[enumerate,1]{label=(\alph*),ref=(\alph*)}
\setlist[enumerate,2]{label=(\arabic*),ref=(\alph{enumi}-\arabic{enumii})}
\setlist[enumerate,3]{label=(\Alph*),ref=(\roman{enumi}-\alph{enumii}-\Alph*)}
\setlist[enumerate,4]{label=(\arabic*),ref=(\roman{enumi}-\alph{enumii}-\Alph{enumiii}-\arabic*)}
\newlist{enumrm}{enumerate}{1}
\setlist[enumrm,1]{label={\rm (\roman*)},ref=(\roman*)}
\newcommand{\enumrmb}{\begin{enumrm}}
\newcommand{\enumrmn}{\end{enumrm}}
\newlist{enumsec}{enumerate}{1}
\setlist[enumsec,1]{label={\rm (\thesection.\alph*)},ref=(\thesection.\alph*)}
\newcommand{\enumsecb}{\begin{enumsec}[resume]}
\newcommand{\enumsecn}{\end{enumsec}}
\newlist{deepenum}{enumerate}{1}
\setlist[deepenum,1]{label=($1$:\alph*),ref=(1:\alph*)}
\newlist{Lminusoneenum}{enumerate}{1}
\setlist[Lminusoneenum,1]{label=($-1$:\alph*),ref=($-1$:\alph*)}
\newlist{Ldeepenum}{enumerate}{1}
\setlist[Ldeepenum,1]{label=($2$:\alph*),ref=($2$:\alph*)}
\numberwithin{equation}{section}
\newtheorem{thm}[equation]{Theorem}
\newtheorem{lem}[equation]{Lemma}
\newtheorem{prop}[equation]{Proposition}
\newtheorem{cor}[equation]{Corollary}
\theoremstyle{definition}
\newtheorem{defn}[equation]{Definition}
\newtheorem{example}[equation]{Example}
\theoremstyle{remark}
\newtheorem{nota}[equation]{Notation}
\newtheorem{rem}[equation]{Remark}
\crefname{sec}{\S}{\S\S}
\crefname{mainthm}{Theorem}{Theorems}
\crefname{maincor}{Corollary}{Corollaries}
\crefname{thm}{Theorem}{Theorems}
\crefname{lem}{Lemma}{Lemmas}
\crefname{prop}{Proposition}{Propositions}
\crefname{cor}{Corollary}{Corollaries}
\crefname{defn}{Definition}{Definitions}
\crefname{conj}{Conjecture}{Conjectures}
\crefname{example}{Example}{Examples}
\crefname{nota}{Notation}{Notations}
\crefname{rem}{Remark}{Remarks}
\crefname{note}{Note}{Notes}
\crefname{case}{Case}{Cases}
\crefname{figure}{Figure}{Figures}
\crefname{section}{\S}{\S\S}
\crefname{enumi}{}{}
\crefname{enumii}{}{}
\crefname{equation}{}{}
\newcommand{\abs}[1]{|#1|}
\newcommand{\imply}{$\Rightarrow$}
\begin{document}
  
\title[On some properties of basic sets]{On some properties of basic sets}

\author{Takashi Shimomura}

\address{Nagoya University of Economics, Uchikubo 61-1, Inuyama 484-8504, Japan}
\curraddr{}
\email{tkshimo@nagoya-ku.ac.jp}
\thanks{}

\subjclass[2020]{Primary 37B02, 37B05.}

\date{\today}

\dedicatory{}

\commby{}

\begin{abstract}
In the theory of zero-dimensional systems
 and their relation to $C^*$-algebras,
 Poon (1990) introduced a class of closed sets.
We call the closed sets quasi-sections.
Medynets (2006) introduced basic sets that are part of quasi-sections
 in his study of aperiodic zero-dimensional systems
 and their relation to Bratteli--Vershik
 models and $C^*$-algebras.
Downarowicz and Karpel (2019)
 introduced the notion of decisiveness in the theory of 
 Bratteli--Vershik models.
We previously clarified that particular quasi-sections can be
 the ``bases'' of the decisive Bratteli--Vershik models
 for zero-dimensional systems with dense aperiodic orbits.
We call them continuously decisive quasi-sections.
However, even the basic topological properties
 of quasi-sections and the basic sets
 have not been studied systematically.
This paper presents such a systematic study.
Some properties are defined, stated, and proved
 in the general settings of
 compact Hausdorff topological dynamics.
For example,
if a topological dynamical system has dense aperiodic orbits
 and no wandering points,
 then every basic set is continuously decisive.
If a zero-dimensional system has 
 dense aperiodic orbits, then there exists a minimal
 continuously decisive basic set such that for every
 minimal set, there exists a unique common point.
\end{abstract}
\keywords{quasi-section, basic set, zero-dimensional systems, Bratteli--Vershik model}

\maketitle

\section{Introduction}
In this paper, we represent $(X,f)$ as a \textit{topological dynamical system}
 if $X$ is a non-empty compact Hausdorff space
 and $f : X \to X$ is a homeomorphism
 (i.e., only invertible topological dynamical systems are considered).
In particular, if $X$ is a compact metrizable zero-dimensional space,
 then $(X,f)$ is called a \textit{zero-dimensional system}.
The quasi-sections that are considered
 in \cite{Poon1990AFSubAlgebrasOfCertainCrossedProducts}
 are found again
 in \cite{Shimomura_2020ArXivBratteliVershikModelsFromBasicSets}
 to be exactly the ``bases''
 of Bratteli--Vershik models of zero-dimensional systems
 (see \cite[Theorems 4.4 and 4.5]{Shimomura_2020ArXivBratteliVershikModelsFromBasicSets}).
In the study of the Bratteli--Vershik models,
 quasi-sections appear to have
 some essential roles.
However, systematic studies have not been conducted (see, for example, 
 \cite{Poon1990AFSubAlgebrasOfCertainCrossedProducts,Medynets_2006CantorAperSysBratDiag,DownarowiczKarpel_2018DynamicsInDimensionZeroASurvey,DownarowiczKarpel_2019DecisiveBratteliVershikmodels,Shimomura_2020AiMBratteliVershikModelsAndGraphCoveringModels,Shimomura_2020ArXivBratteliVershikModelsFromBasicSets}).
Although the Bratteli--Vershik models are related to
 the study of $C^*$-algebras,
 in this study, we focus on certain purely topological aspects
 of quasi-sections and basic sets that are not ``thick.''
To understand the meaning of this thickness,
 we must note the construction of the decisive Bratteli--Vershik models of
 Bratteli--Vershikizable zero-dimensional systems that do not have
 isolated periodic orbits.
Quasi-sections need to have empty interiors for such systems
(see \cite[Proposition 1.2 and Theorem 3.1]{DownarowiczKarpel_2019DecisiveBratteliVershikmodels}).
The above-mentioned work also clarified that
 such zero-dimensional systems have dense sets of aperiodic orbits.
We introduced quasi-sections in \cite{Shimomura_2020ArXivBratteliVershikModelsFromBasicSets} and showed that
 in such zero-dimensional systems, minimal (with respect to inclusion)
 quasi-sections have empty interiors (see
 \cite[Proposition 2.22]{Shimomura_2020ArXivBratteliVershikModelsFromBasicSets}).
In this manner, in \cite{Shimomura_2020ArXivBratteliVershikModelsFromBasicSets},
 a triple $(X,f,B)$ of a zero-dimensional system $(X,f)$ and a quasi-section
 $B$ is said to be ``continuously decisive'' when the interior of $B$ is the
 empty set. 
 If $(X,f)$ is fixed, then the quasi-section $B$ with an empty interior
 is also said to be
 ``continuously decisive'' (see \cref{defn:condec}).
In the same paper, we showed that
 there exists a one-to-one correspondence between the equivalence classes of
 continuously decisive ordered Bratteli diagrams
 and the topological conjugacy classes of continuously decisive triples
 of zero-dimensional systems with quasi-sections
 (see \cite[Theorem 5.7]{Shimomura_2020ArXivBratteliVershikModelsFromBasicSets}).
Therefore, the continuous decisiveness condition for quasi-sections is valuable
 to study.

On the other hand, Medynets \cite{Medynets_2006CantorAperSysBratDiag} 
 defined the basic sets for aperiodic zero-dimensional systems and showed their 
 existence for such systems.
This class of basic sets is a subclass of the class of quasi-sections.
In \cite{Shimomura_2020AiMBratteliVershikModelsAndGraphCoveringModels,Shimomura_2020ArXivBratteliVershikModelsFromBasicSets},
 we have defined an analogy of basic sets for all
 (invertible) zero-dimensional systems and have shown their existence 
 for all zero-dimensional systems (see \cref{defn:basicset,thm:extremal}).
Furthermore, we have shown a one-to-one correspondence between
 the equivalence classes of ordered Bratteli diagrams with a closing property
 and the topological conjugacy
classes of continuously decisive triples of zero-dimensional systems
 with basic sets (see \cite[Theorem 5.9]{Shimomura_2020ArXivBratteliVershikModelsFromBasicSets}).
In addition,
 we have shown the existence of a special kind of basic sets (see \cref{thm:extremal}).
In this paper,
 these new basic sets are called ``extremal'' (cf. \cref{defn:extremal}) basic sets.
In these ways, some refinements of quasi-sections have already been introduced.
Henceforth, the investigation of their basic properties 
 and the relations between them appears to be a worthwhile pursuit.
\section{Preliminaries}
In this study, quasi-sections are investigated for
 the study of zero-dimensional systems in the form of
 Bratteli--Vershik models.
However, many definitions, propositions, and proofs
 need only topological dynamical settings.
Thus, throughout this section, $(X,f)$ is a topological dynamical system.
 If we restrict $(X,f)$ to a zero-dimensional system,
 then we explicitly state that $(X,f)$ is a zero-dimensional system.
\begin{nota}
For a subset $A$ of a topological space $X$, the set of interior points
 of $A$ is denoted by $\textrm{int}A$.
\end{nota}
\begin{nota}
For each $x \in X$, the orbit of $x$ is denoted by $O(x)$,
 i.e., $O(x) := \seb f^n(x) \mid n \bi \sen$.
We also denote it as $O^+(x) := \seb f^n(x) \mid n \ge 0 \sen$ and
 $O^-(x) := \seb f^n(x) \mid n \le 0 \sen$.
In addition, for a subset $A \subseteq X$, we denote the orbit 
 as $O(A) := \bigcup_{n \bi}f^n(A)$,
 $O^+(A) := \bigcup_{n \ge 0}f^n(A)$, and
 $O^-(A) := \bigcup_{n \le 0}f^n(A)$.
\end{nota}
Evidently, it follows that $O(A) = X$ if and only if for all $x \in X$,
 $O(x) \cap A \nekuu$.
\begin{nota}\label{nota:minimalset}
A non-empty closed set $M \subseteq X$ is called a \textit{minimal set} 
 if $f(M) = M$ and every orbit in $M$ is dense in $M$.
Using this, we denote $\Mcal_f := \seb M \mid M \text{ is a minimal set.} \sen$.
Consequently, we also denote $M_f := \clos{\bigcup_{M \in \Mcal_f} M}$.
\end{nota}

\begin{rem}
If the periodic orbits are dense in $(X,f)$, then $M_f = X$. 
\end{rem}

Let us recall the non-wandering sets for topological dynamical systems.
\begin{defn}
Let $(X,f)$ be a topological dynamical system.
A point $x \in X$ is said to be \textit{wandering} if there exists
 an open set $U \ni x$ such that $f^n(U) \cap U \iskuu$ for all $n \ne 0$.
Thus, we define  $\Omega_f := \seb x \mid x \text{ is not wandering. } \sen$.
The set $\Omega_f$ is said to be the \textit{non-wandering set} of $(X,f)$.
\end{defn}

\begin{lem}\label{lem:MfOf}
It follows that $M_f \subseteq \Omega_f$.
\end{lem}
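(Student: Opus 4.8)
The plan is to first establish that $\Omega_f$ is closed, then reduce the inclusion $M_f \subseteq \Omega_f$ to the statement that every point of every minimal set is non-wandering, and finally prove the latter by a compactness (finite subcover) argument driven by minimality.

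First I would observe that the set of wandering points is open: if $x$ is wandering, witnessed by an open set $U \ni x$ with $f^n(U) \cap U \iskuu$ for all $n \ne 0$, then the very same $U$ witnesses that every $y \in U$ is wandering. Hence the complement $\Omega_f$ is closed. Since $M_f = \clos{\bigcup_{M \in \Mcal_f} M}$ is by definition the closure of the union of the minimal sets and $\Omega_f$ is closed, it suffices to prove $\bigcup_{M \in \Mcal_f} M \subseteq \Omega_f$; that is, every point lying in some minimal set is non-wandering.

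So fix a minimal set $M$ and a point $x \in M$; I must show $x \in \Omega_f$. Let $U \ni x$ be an arbitrary open subset of $X$ and set $V := U \cap M$, an open neighborhood of $x$ in the subspace $M$. Because $f(M) = M$, the restriction $f|_M$ is a homeomorphism of $M$, and minimality gives $\clos{O(y)} = M$ for every $y \in M$; hence every orbit meets the nonempty open set $V$, which translates into $M = \bigcup_{n \bi} f^n(V)$. Now suppose, for contradiction, that $x$ is wandering, so $f^n(U) \cap U \iskuu$ for all $n \ne 0$. Since $V \subseteq U$ and $f^n(V) \subseteq f^n(U)$, this forces $f^n(V) \cap V \iskuu$ for all $n \ne 0$, and consequently the sets $\{f^n(V)\}_{n \bi}$ are pairwise disjoint and nonempty, hence all distinct. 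But they form an open cover of the compact space $M$, so finitely many of them, say $f^{n_1}(V), \dotsc, f^{n_k}(V)$, already cover $M$; choosing any index $m \notin \{n_1, \dotsc, n_k\}$, the set $f^m(V) \subseteq M$ is nonempty yet disjoint from $\bigcup_{i} f^{n_i}(V) = M$, a contradiction. Therefore $f^n(U) \cap U \nekuu$ for some $n \ne 0$, so $x$ is non-wandering.

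The closedness of $\Omega_f$ and the reduction to the minimal sets are routine; the heart of the argument is the step that every point of a minimal set is non-wandering, whose essential input is that minimality makes every orbit dense, yielding $M = \bigcup_{n} f^n(V)$, after which compactness of $M$ does the work. (Equivalently, one may invoke the standard fact that every point of a minimal system is recurrent, hence non-wandering.) A small subtlety worth recording is that non-wandering within the invariant closed subsystem $(M, f|_M)$ immediately transfers to $(X,f)$, precisely because $f^n(V) \cap V \subseteq f^n(U) \cap U$; the argument above already builds this in and so proves $x \in \Omega_f$ directly.
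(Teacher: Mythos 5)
Your proof is correct and follows the same route as the paper: reduce to showing $M \subseteq \Omega_f$ for each $M \in \Mcal_f$ and then pass to the closure using that $\Omega_f$ is closed. The only difference is that the paper declares the inclusion $M \subseteq \Omega_f$ ``evident,'' whereas you supply the standard compactness/minimality argument (every point of a minimal set is non-wandering), which is a valid and complete justification of that step.
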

\begin{proof}
It is evident that $M \subseteq \Omega_f$ for each $M \in \Mcal_f$.
Then, it follows that $M_f \subseteq \Omega_f$ because
 $\Omega_f$ is closed.
\end{proof}

\begin{nota}
For each $x \in X$, the $\omega$-limit set is denoted as $\omega(x)$,
 and the $\alpha$-limit set is denoted as $\alpha(x)$.
\end{nota}
\begin{nota}\label{nota:ape}
Let $(X,f)$ be a topological dynamical system.
A point $x \in X$ is said to be \textit{aperiodic}
 if $\abs{O(x)} = \infty$.
The set of aperiodic points is denoted as $A_f$, i.e.,
 $A_f = \seb x \mid f^n(x) \ne x \textrm{ for all } n \ne 0 \sen$.
\end{nota}

\begin{rem}
We note that $f(M_f) = M_f$, and $(M_f, f|_{M_f})$ is a sub-dynamical system.
\end{rem}

Downarowicz and Karpel \cite{DownarowiczKarpel_2019DecisiveBratteliVershikmodels}
 considered zero-dimensional systems
 in which aperiodic points are dense.
This property plays an important role in the present study.
\begin{defn}\label{defn:denape}
A topological dynamical system $(X,f)$ is \textit{densely aperiodic}
 if aperiodic points are dense in $X$, i.e., $\clos{A_f} = X$.
\end{defn}

In the theory of zero-dimensional systems,
 following Medynets \cite{Medynets_2006CantorAperSysBratDiag},
 we apply the following definition. 

\begin{defn}\label{defn:complete-section}
Let $(X,f)$ be a zero-dimensional system.
A clopen set $U \subseteq X$ is called a \textit{complete section}
 if for each $x \in X$, it follows that $U \cap O(x) \nekuu$.
\end{defn}

For the theory of zero-dimensional systems, we need only the following.
\begin{prop}\label{prop:open-complete-section}
Let $(X,f)$ be a zero-dimensional system and $U$ be a clopen set.
Then, the following statements are equivalent:
\enumb
\item\label{compsec:compsec} $U$ is a complete section,
\item\label{compsec:+}
 $O^+(U) = X$,
\item\label{compsec:-}
 $O^-(U) = X$,
\item\label{compsec:omega}
 $\omega(x) \cap U \nekuu$ for every $x \in X$,
\item\label{compsec:alpha}
 $\alpha(x) \cap U \nekuu$ for every $x \in X$, and
\item\label{compsec:minimal}
 $M \cap U \nekuu$ for every $M \in \Mcal_f$.
\enumn
\end{prop}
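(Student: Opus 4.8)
The plan is to convert the three ``orbit-surjectivity'' conditions into pointwise orbit-intersection statements and then close a web of implications that threads through the two limit-set conditions and the minimal-set condition. Using the observation recorded just before \cref{nota:minimalset}, condition \cref{compsec:compsec} is equivalent to $O(U) = X$, i.e.\ to the requirement that $O(x) \cap U \nekuu$ for every $x$. Re-indexing the union $O^+(U) = \bigcup_{n \ge 0} f^n(U)$ shows that \cref{compsec:+} holds iff $O^-(x) \cap U \nekuu$ for every $x$, and symmetrically \cref{compsec:-} holds iff $O^+(x) \cap U \nekuu$ for every $x$. With these three reformulations in hand, the whole proposition becomes a comparison of when the forward orbit, the backward orbit, the two limit sets, and the minimal sets meet $U$.

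Two elementary principles drive most of the implications. First, since $U$ is open, whenever $\omega(x)$ (resp.\ $\alpha(x)$) meets $U$, some forward (resp.\ backward) iterate of $x$ already lies in $U$: a point $y \in \omega(x) \cap U$ is a limit of iterates $f^{n_k}(x)$ with $n_k \to +\infty$, and these eventually enter the open set $U$. This gives \cref{compsec:omega} $\Rightarrow$ \cref{compsec:-} and \cref{compsec:alpha} $\Rightarrow$ \cref{compsec:+}. Second, since a one-sided orbit sits inside the full orbit, \cref{compsec:+} and \cref{compsec:-} each imply \cref{compsec:compsec}. Finally, invariance closes one end of the loop: given $M \in \Mcal_f$ and any $x \in M$, the inclusion $O(x) \subseteq M$ (because $f(M)=M$) together with \cref{compsec:compsec} forces $M \cap U \nekuu$, so \cref{compsec:compsec} $\Rightarrow$ \cref{compsec:minimal}.

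The \emph{crux} is the passage from the minimal-set condition back to the limit-set conditions, namely \cref{compsec:minimal} $\Rightarrow$ \cref{compsec:omega} and \cref{compsec:minimal} $\Rightarrow$ \cref{compsec:alpha}. Here I would invoke the standard fact, valid in any compact Hausdorff system by a Zorn's lemma argument, that every non-empty closed $f$-invariant set contains a minimal set, applied to $\omega(x)$ and to $\alpha(x)$. If $M \subseteq \omega(x)$ is such a minimal set, then \cref{compsec:minimal} gives $M \cap U \nekuu$, whence $\omega(x) \cap U \nekuu$; the $\alpha$-case is identical. This existence-of-minimal-sets step is the main obstacle, in the sense that it is the only place where a genuine compactness argument rather than a formal manipulation enters; everything else is bookkeeping. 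Assembling the pieces yields the cycle \cref{compsec:compsec} $\Rightarrow$ \cref{compsec:minimal} $\Rightarrow$ \cref{compsec:omega} $\Rightarrow$ \cref{compsec:-} $\Rightarrow$ \cref{compsec:compsec}, which already makes \cref{compsec:compsec,compsec:-,compsec:omega,compsec:minimal} equivalent, while the parallel chain \cref{compsec:minimal} $\Rightarrow$ \cref{compsec:alpha} $\Rightarrow$ \cref{compsec:+} $\Rightarrow$ \cref{compsec:compsec} attaches the remaining two conditions; hence all six are equivalent. The only auxiliary facts I would still verify explicitly are that $\omega(x)$ and $\alpha(x)$ are non-empty and satisfy $f(\omega(x)) = \omega(x)$ and $f(\alpha(x)) = \alpha(x)$, so that the minimal-set theorem applies; these are standard consequences of the compactness of $X$ and of $f$ being a homeomorphism.
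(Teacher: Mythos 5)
Your argument is correct and is essentially the route the paper takes: the paper reduces the proposition to \cref{lem:orbitopen+-} for open sets, whose proof runs the same cycle of implications you describe --- from $O(U)=X$ to the minimal-set condition, then into the limit-set conditions via the existence of a minimal set inside $\omega(x)$ and $\alpha(x)$, and back to the one-sided orbit conditions using only the openness of $U$. The minor differences (your explicit re-indexing of $O^{\pm}(U)$ into pointwise statements, and arguing directly rather than by contradiction for \cref{compsec:alpha} $\Rightarrow$ \cref{compsec:+}) are purely presentational.
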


However, a proof is given in general topological dynamical settings
 through the following lemma.

\begin{lem}\label{lem:orbitopen+-}
Let $U \subseteq X$ be an open set.
Then, the following are equivalent:
\enumb
\item\label{item:orbitopen} $O(U) = X$,
\item\label{item:orbitopen+} $O^+(U) = X$,
\item\label{item:orbitopen-} $O^-(U) = X$,
\item\label{item:openalpha}
 $\alpha(x) \cap U \nekuu$ for every $x \in X$,
\item\label{item:openomega}
 $\omega(x) \cap U \nekuu$ for every $x \in X$, and
\item\label{item:openminimal}
 $M \cap U \nekuu$ for every $M \in \Mcal_f$.

\enumn
\end{lem}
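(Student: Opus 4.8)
The plan is to prove all six statements equivalent through a single cycle of implications that routes each ``limit-set'' condition (\cref{item:openalpha,item:openomega,item:openminimal}) through an ``orbit'' condition (\cref{item:orbitopen,item:orbitopen+,item:orbitopen-}). Since $O^{+}(U)\subseteq O(U)$ and $O^{-}(U)\subseteq O(U)$, the implications $\cref{item:orbitopen+}\Rightarrow\cref{item:orbitopen}$ and $\cref{item:orbitopen-}\Rightarrow\cref{item:orbitopen}$ are immediate, so it suffices to prove $\cref{item:orbitopen}\Rightarrow\cref{item:openminimal}$, then $\cref{item:openminimal}\Rightarrow\cref{item:openalpha}$ and $\cref{item:openminimal}\Rightarrow\cref{item:openomega}$, and finally $\cref{item:openalpha}\Rightarrow\cref{item:orbitopen+}$ and $\cref{item:openomega}\Rightarrow\cref{item:orbitopen-}$. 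Chasing these yields $\cref{item:orbitopen}\Rightarrow\cref{item:openminimal}\Rightarrow\cref{item:openalpha}\Rightarrow\cref{item:orbitopen+}\Rightarrow\cref{item:orbitopen}$ and likewise the branch through \cref{item:openomega,item:orbitopen-}, so every condition is equivalent to \cref{item:orbitopen}. The only background facts I rely on are standard for compact Hausdorff dynamics: for each $x$, the sets $\omega(x)$ and $\alpha(x)$ are non-empty (a nested intersection of non-empty closed sets in a compact space), closed, and $f$-invariant; and every non-empty closed $f$-invariant set contains a minimal set, by Zorn's lemma applied to the closed invariant subsets under reverse inclusion.

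For the structural implications I would argue as follows. For $\cref{item:orbitopen}\Rightarrow\cref{item:openminimal}$, fix $M\in\Mcal_f$ and $x\in M$; by $O(U)=X$ there is some $n\bi$ with $f^n(x)\in U$, and $f^n(x)\in M$ since $f(M)=M$, so $M\cap U\nekuu$. For $\cref{item:openminimal}\Rightarrow\cref{item:openalpha}$ (and symmetrically $\cref{item:openminimal}\Rightarrow\cref{item:openomega}$), fix $x$ and observe that $\alpha(x)$ is a non-empty closed invariant set, hence contains a minimal set $M\subseteq\alpha(x)$; then \cref{item:openminimal} gives $M\cap U\nekuu$, and $M\cap U\subseteq\alpha(x)\cap U$ yields \cref{item:openalpha}. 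Thus the minimal-set condition and the two limit-set conditions are each fed by an orbit condition and in turn feed one back, so the genuine content lies only in the two openness-driven steps.

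The two steps where openness of $U$ is essential — and where I expect the only (still mild) subtlety — are $\cref{item:openalpha}\Rightarrow\cref{item:orbitopen+}$ and $\cref{item:openomega}\Rightarrow\cref{item:orbitopen-}$. For the former, fix $x$ and pick $y\in\alpha(x)\cap U$. Because $y$ lies in the $\alpha$-limit set and $U$ is an open neighborhood of $y$, the backward orbit of $x$ meets $U$, i.e.\ $f^{-n}(x)\in U$ for some $n\ge 0$; hence $x=f^{n}(f^{-n}(x))\in f^{n}(U)\subseteq O^{+}(U)$, and as $x$ was arbitrary $O^{+}(U)=X$, which is \cref{item:orbitopen+}. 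The symmetric argument proves $\cref{item:openomega}\Rightarrow\cref{item:orbitopen-}$, using the forward orbit together with $O^{-}(U)=\bigcup_{m\le 0}f^{m}(U)$. The care required here is purely the sign bookkeeping — matching ``the forward orbit enters $U$'' with $O^{-}(U)=X$ and ``the backward orbit enters $U$'' with $O^{+}(U)=X$ — and the observation that accumulation of an orbit at a point of the \emph{open} set $U$ forces the orbit to enter $U$. In the possibly non-metrizable setting this last point is read off from the net description of $\omega(x)$ and $\alpha(x)$ rather than from sequences, but the argument is otherwise unchanged.
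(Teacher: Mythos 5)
Your proposal is correct and follows essentially the same route as the paper: the cycle $\cref{item:orbitopen}\Rightarrow\cref{item:openminimal}\Rightarrow\cref{item:openalpha},\cref{item:openomega}\Rightarrow\cref{item:orbitopen+},\cref{item:orbitopen-}\Rightarrow\cref{item:orbitopen}$, with openness of $U$ used exactly where you flag it (the paper phrases $\cref{item:openalpha}\Rightarrow\cref{item:orbitopen+}$ contrapositively, you argue it directly, but it is the same argument). No gaps.
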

\begin{proof}
To show \cref{item:orbitopen} $\Rightarrow$ \cref{item:openminimal},
 let $U$ be an open set with $O(U) = X$ and $M \in \Mcal_f$.
Then, every orbit in $M$ intersects with $U$, as desired.
To show \cref{item:openminimal} $\Rightarrow$ \cref{item:openalpha},
 let $x \in X$.
There exists an $M \in \Mcal_f$ such that $M \subseteq \alpha(x)$.
This concludes the proof in this case.
In the same manner,
 one can show \cref{item:openminimal} $\Rightarrow$ \cref{item:openomega}.
To show \cref{item:openalpha} $\Rightarrow$ \cref{item:orbitopen+}, let
 $U$ be an open set with the condition \cref{item:openalpha}.
Suppose that there exists an $ x \in X \setminus O^+(U)$.
Then, it follows that $O^-(x) \cap U = \kuu$.
Consequently, it follows that $\alpha(x) \cap U = \kuu$ because
 $U$ is an open set.
This contradicts \cref{item:openalpha}.
In the same manner, we can show that
 \cref{item:openomega} $\Rightarrow$ \cref{item:orbitopen-}.
Finally, it is a direct consequence that
 \cref{item:orbitopen+} $\Rightarrow$ \cref{item:orbitopen} and
 \cref{item:orbitopen-} $\Rightarrow$ \cref{item:orbitopen}.
This concludes the proof.
\end{proof}

By the compactness of $X$, 
\cref{item:orbitopen+} is equivalent to
 $\bigcup_{i = 0}^nf^i(U) = X$ for some $n \ge 0$, and
\cref{item:orbitopen-} is equivalent to
 $\bigcup_{i = 0}^nf^{-i}(U) = X$ for some $n \ge 0$.

\vspace{3mm}

\noindent \textit{Proof of \cref{prop:open-complete-section}.}

We can directly obtain the proof from \cref{lem:orbitopen+-}.
\qed

\section{Quasi-section}

In this section, we present a basic study of the quasi-sections.
In the study of
 the $C^*$-algebras of zero-dimensional systems, Poon \cite{Poon1990AFSubAlgebrasOfCertainCrossedProducts} considered
 closed sets such that every clopen neighborhood is a complete section.
We introduce
 the following definition in the case of general topological dynamical systems.
\begin{defn}
A closed set $A \subseteq X$ is a \textit{quasi-section}
 if every open set $U \supseteq A$ satisfies $O(U) = X$.
\end{defn}

The following are the most basic properties of the quasi-sections in this study:
\begin{prop}\label{prop:qs}
Let $(X,f)$ be a topological dynamical system and 
 $A \subseteq X$ be a closed set.
Then, the following statements are equivalent:
\enumb
\item\label{qs:qs} $A$ is a quasi-section,
\item\label{qs:orbit} $\clos{O(x)} \cap A \nekuu$ for every $x \in X$,
\item\label{qs:omega} $\omega(x) \cap A \nekuu$ for every $x \in X$,
\item\label{qs:alpha} $\alpha(x) \cap A \nekuu$ for every $x \in X$, and
\item\label{qs:minimal} $A \cap M \nekuu$ for every $M \in \Mcal_f$.
\enumn
\end{prop}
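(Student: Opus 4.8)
The plan is to derive all five equivalences from the already-established \cref{lem:orbitopen+-} together with a few elementary facts about limit sets. The natural hub is statement \cref{qs:minimal}, the intersection condition against minimal sets. I would first prove \cref{qs:qs} $\Leftrightarrow$ \cref{qs:minimal} — this is where the passage between the closed set $A$ and its open neighbourhoods takes place and where the hypotheses on $X$ are genuinely used — and then close a short cycle among \cref{qs:orbit}, \cref{qs:omega}, \cref{qs:alpha}, and \cref{qs:minimal} that uses only inclusions of $\omega$- and $\alpha$-limit sets.

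For \cref{qs:minimal} $\Rightarrow$ \cref{qs:qs}: if $A \cap M \nekuu$ for every $M \in \Mcal_f$ and $U \supseteq A$ is open, then $U \cap M \supseteq A \cap M \nekuu$ for every minimal set, so \cref{lem:orbitopen+-} (in the direction \cref{item:openminimal} $\Rightarrow$ \cref{item:orbitopen}) gives $O(U) = X$; hence $A$ is a quasi-section. For the converse \cref{qs:qs} $\Rightarrow$ \cref{qs:minimal} I would argue contrapositively: if $A \cap M \iskuu$ for some minimal (hence compact, closed) set $M$, then, using that $X$ is compact Hausdorff and therefore normal, I separate the disjoint closed sets $A$ and $M$ by an open set $U$ with $A \subseteq U$ and $U \cap M \iskuu$. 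The failure of \cref{item:openminimal} for this $U$ forces, via \cref{lem:orbitopen+-}, that $O(U) \neq X$, contradicting that $A$ is a quasi-section.

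It remains to connect the three limit-set conditions to \cref{qs:minimal}. The facts I would invoke are: (i) for every $x \in X$ the sets $\omega(x)$ and $\alpha(x)$ are non-empty, closed, and $f$-invariant, so each contains a minimal set; (ii) $\omega(x) \cup \alpha(x) \subseteq \clos{O(x)}$; and (iii) if $x$ lies in a minimal set $M$, then $\clos{O(x)} = M$. From (i), \cref{qs:minimal} yields $A \cap \omega(x) \nekuu$ and $A \cap \alpha(x) \nekuu$ for every $x$, which are \cref{qs:omega} and \cref{qs:alpha}. From (ii), either of \cref{qs:omega} or \cref{qs:alpha} immediately implies \cref{qs:orbit}. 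Finally, from (iii), applying \cref{qs:orbit} to a point $x \in M$ of an arbitrary minimal set $M$ gives $A \cap M = A \cap \clos{O(x)} \nekuu$, which is \cref{qs:minimal}. This closes the loop, so \cref{qs:orbit}--\cref{qs:minimal} are mutually equivalent, and combined with the first step all five statements coincide.

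The only inputs that are not purely formal are the existence of a minimal set inside any non-empty closed invariant set (a Zorn's lemma argument, valid because $X$ is compact) and the normal separation of $A$ from a disjoint minimal set; both rest essentially on the compact Hausdorff hypothesis, so these are the steps I would state most carefully. I expect the separation step in \cref{qs:qs} $\Rightarrow$ \cref{qs:minimal} to be the main point requiring attention, since it is the only place where one must produce an open neighbourhood of $A$ with prescribed behaviour rather than merely chase inclusions; everything else reduces to routine manipulation of limit sets and a single appeal to \cref{lem:orbitopen+-}.
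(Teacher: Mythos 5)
Your proposal is correct and follows essentially the same route as the paper: both use \cref{qs:minimal} as the hub, appeal to \cref{lem:orbitopen+-} for the passage between $A$ and its open neighbourhoods (with the normality of the compact Hausdorff space $X$ justifying the separation step, which the paper leaves implicit in the phrase ``because both $A$ and $M$ are closed sets''), and chase the standard inclusions among orbit closures, limit sets, and minimal sets. The only difference is cosmetic: you close the cycle with \cref{qs:orbit} $\Rightarrow$ \cref{qs:minimal}, whereas the paper closes it with \cref{qs:orbit} $\Rightarrow$ \cref{qs:qs}.
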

\begin{proof}
To show \cref{qs:qs} \imply \cref{qs:minimal}, let $M \in \Mcal_f$
and $U \supseteq A$ be an open set.
Then, by the definition of the quasi-section, it follows that $O(U) = X$.
By \cref{item:openminimal} of \cref{lem:orbitopen+-},
 it follows that $U \cap M \nekuu$.
It then follows that $A \cap M \nekuu$
 because both $A$ and $M$ are closed sets.
Next, the implications \cref{qs:minimal} \imply \cref{qs:alpha} and
 \cref{qs:minimal} \imply \cref{qs:omega} are evident because
 all $\alpha$-limit sets and $\omega$-limit sets contain minimal sets.
The implications \cref{qs:alpha} \imply \cref{qs:orbit} and
 \cref{qs:omega} \imply \cref{qs:orbit} are evident because
 all orbit closures contain an $\alpha$-limit set and 
 $\omega$-limit set.
Finally, by the definition of the quasi-section,
 the implication \cref{qs:orbit} \imply \cref{qs:qs} is evident.
This completes the proof.
\end{proof}

The minimality of the quasi-sections plays important roles in the present study.

\begin{nota}We say that a quasi-section $A$ is \textit{minimal}
 if $A$ is minimal with respect to the inclusion of sets.
\end{nota}
\begin{rem}
If $(X,f)$ is itself minimal, i.e., $\seb X \sen = \Mcal_f$, then
 a quasi-section is minimal if and only if it consists of a single point.
\end{rem}

\begin{rem}
The existence of minimal quasi-sections had been observed
 in \cite[\S 4]{Poon1990AFSubAlgebrasOfCertainCrossedProducts} in the 
 zero-dimensional case.
However, this fact is still valid in general topological dynamical settings.
Concretely, we obtain the following lemma:
\end{rem}

\begin{lem}
Every quasi-section contains a minimal quasi-section.
\end{lem}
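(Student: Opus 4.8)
The plan is to apply Zorn's lemma to the family of quasi-sections contained in a given quasi-section, ordered by inclusion, and extract a minimal element; the crux is showing that an intersection along a chain is still a quasi-section, for which I would rely on the equivalent formulation in \cref{prop:qs} rather than the raw definition.

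First I would fix a quasi-section $A$ and set $\mathcal{F} := \{ B \subseteq A : B \text{ is a quasi-section} \}$, partially ordered by inclusion. Since every non-empty compact dynamical system possesses a minimal set, we have $\Mcal_f \neq \emptyset$, and by \cref{prop:qs}\cref{qs:minimal} every quasi-section meets each minimal set; in particular the members of $\mathcal{F}$ are non-empty and $A \in \mathcal{F}$, so $\mathcal{F} \neq \emptyset$. To obtain a minimal element I would verify that every non-empty chain $\{B_i\}_{i\in I} \subseteq \mathcal{F}$ has a lower bound in $\mathcal{F}$, and the natural candidate is $B := \bigcap_{i \in I} B_i$.

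That $B$ is closed and contained in $A$ is immediate. The key step is that $B$ is again a quasi-section, and here I would invoke the criterion \cref{prop:qs}\cref{qs:minimal}: it suffices to check $B \cap M \neq \emptyset$ for every $M \in \Mcal_f$. Fixing such an $M$, each $B_i \cap M$ is a non-empty (again by \cref{qs:minimal}) closed subset of the compact space $X$, and because $\{B_i\}_{i \in I}$ is totally ordered by inclusion, the family $\{B_i \cap M\}_{i\in I}$ enjoys the finite intersection property, since any finite subfamily has a smallest member whose intersection with $M$ is non-empty. By compactness, $\bigcap_{i \in I}(B_i \cap M) = B \cap M \neq \emptyset$, so $B$ is a quasi-section; thus $B \in \mathcal{F}$ is a lower bound for the chain.

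I expect the main obstacle to be exactly this compactness and finite-intersection argument: it is what upgrades the pointwise condition ``meets every minimal set'' into a property stable under arbitrary chain intersections, and it explains why the equivalent characterization of \cref{prop:qs} is indispensable here, as the defining ``every open neighborhood has full orbit'' condition does not obviously pass to intersections. Once the lower-bound property is in hand, Zorn's lemma produces a minimal element of $\mathcal{F}$, which is by construction a minimal quasi-section contained in $A$.
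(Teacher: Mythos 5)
Your argument is correct and is exactly the argument the paper intends: the paper's proof is a one-line appeal to \cref{qs:minimal} of \cref{prop:qs} together with compactness of $X$, and your Zorn's-lemma/finite-intersection-property write-up simply supplies the details that the paper leaves implicit. No gaps.
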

\begin{proof}
The proof follows directly from \cref{qs:minimal} of \cref{prop:qs}
 in combination with
 the assumption that $X$ is a compact Hausdorff space. 
\end{proof}

\begin{prop}\label{prop:minimalqsMf}
Every minimal quasi-section is contained in $M_f$.
\end{prop}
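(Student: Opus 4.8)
The plan is to exploit the characterization of quasi-sections provided by \cref{prop:qs}, namely that a closed set is a quasi-section precisely when it meets every minimal set. Given a minimal quasi-section $A$, I would consider the smaller closed set $A \cap M_f$ and argue that it is again a quasi-section; minimality of $A$ with respect to inclusion then forces $A \cap M_f = A$, which is exactly the assertion $A \subseteq M_f$.

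First I would note that $A \cap M_f$ is closed, being the intersection of the closed set $A$ with the closed set $M_f$. To verify that it is a quasi-section I would appeal to \cref{qs:minimal} of \cref{prop:qs} and check that $(A \cap M_f) \cap M \nekuu$ for every $M \in \Mcal_f$. The key observation is that, by the very definition $M_f = \clos{\bigcup_{M \in \Mcal_f} M}$ in \cref{nota:minimalset}, each minimal set $M$ is contained in $M_f$; hence $M \cap M_f = M$ and therefore $(A \cap M_f) \cap M = A \cap M$. Since $A$ is itself a quasi-section, \cref{qs:minimal} of \cref{prop:qs} gives $A \cap M \nekuu$, whence $(A \cap M_f) \cap M \nekuu$ as required. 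Thus $A \cap M_f$ is a quasi-section.

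Finally, since $A \cap M_f \subseteq A$ and $A$ is a minimal quasi-section, the inclusion must be an equality, so $A = A \cap M_f \subseteq M_f$. There is no serious obstacle here: the only point requiring a moment's care is confirming that intersecting with $M_f$ does not destroy the quasi-section property, and this is handled entirely by the containment of every minimal set in $M_f$ together with the minimal-set criterion of \cref{prop:qs}. One could equally run the argument contrapositively, taking a putative point of $A \setminus M_f$ and removing a suitable closed neighborhood of it, but the intersection approach above is the most economical.
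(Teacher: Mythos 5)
Your proof is correct and follows essentially the same strategy as the paper: exhibit a quasi-section inside $A \cap M_f$ via the minimal-set criterion of \cref{prop:qs} and invoke minimality of $A$. The only (harmless) difference is that you use $A \cap M_f$ itself, whereas the paper takes the closure of a chosen point $x_M \in A \cap M$ for each $M \in \Mcal_f$; both yield the same conclusion.
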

\begin{proof}
Let $A$ be a minimal quasi-section.
Take $x_M \in A \cap M$ for each $M \in \Mcal_f$.
Let $A' := \clos{\seb x_M \mid M \in \Mcal_f \sen}$.
Then, we evidently obtain $A' \subseteq A \cap M_f$, and $A'$ is a quasi-section
 by \cref{qs:minimal} of \cref{prop:qs}.
By the minimality of $A$, we obtain $A = A' \subseteq M_f$, as desired.
\end{proof}

A consequence of \cref{prop:minimalqsMf} is the following.
\begin{prop}\label{prop:orbitofA}
If $A$ is a minimal quasi-section, then
 $\clos{O(A)} = \clos{O^+(A)} = \clos{O^-(A)} = M_f$.
\end{prop}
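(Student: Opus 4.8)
The plan is to sandwich each of the three orbit closures between $M_f$ from above and the one-sided orbit closures from below. First I would record the easy half. By \cref{prop:minimalqsMf} we have $A \subseteq M_f$, and since $M_f$ is closed and satisfies $f(M_f) = M_f$, every iterate $f^n(A)$ stays inside $M_f$; hence $O(A), O^+(A), O^-(A) \subseteq M_f$, and taking closures gives $\clos{O(A)}, \clos{O^+(A)}, \clos{O^-(A)} \subseteq M_f$. This settles the ``$\subseteq M_f$'' part of all three equalities simultaneously.

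For the reverse inclusions, note that $O^+(A), O^-(A) \subseteq O(A)$, so it is enough to prove $M_f \subseteq \clos{O^+(A)}$ and $M_f \subseteq \clos{O^-(A)}$; the inclusion $M_f \subseteq \clos{O(A)}$ is then automatic, and combined with the first paragraph all three closures equal $M_f$. Since $\clos{O^+(A)}$ is closed and $M_f = \clos{\bigcup_{M \in \Mcal_f} M}$, it suffices to show $M \subseteq \clos{O^+(A)}$ for each $M \in \Mcal_f$. Fix such an $M$. By \cref{qs:minimal} of \cref{prop:qs} we may pick a point $y \in A \cap M$, and since $y \in A$ the whole forward orbit $O^+(y)$ lies in $O^+(A)$; thus everything reduces to the claim that $O^+(y)$ is dense in $M$.

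The only nontrivial point is exactly this claim: the defining property of a minimal set (\cref{nota:minimalset}) only asserts that the two-sided orbit $O(y)$ is dense, so I must upgrade this to density of the one-sided orbit. I would argue that $\clos{O^+(y)}$ is a non-empty closed set satisfying $f(\clos{O^+(y)}) \subseteq \clos{O^+(y)}$, and that such a set necessarily contains a minimal set $D \in \Mcal_f$ (the usual Zorn's-lemma existence argument uses only forward invariance, since for a minimal element $D$ the set $f(D)$ again lies in the family and forces $f(D) = D$). As $D \subseteq \clos{O^+(y)} \subseteq M$ and $M$ itself is minimal, we get $D = M$, whence $M \subseteq \clos{O^+(y)} \subseteq \clos{O^+(A)}$. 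Taking the union over all $M \in \Mcal_f$ and then the closure yields $M_f \subseteq \clos{O^+(A)}$. The argument for $\clos{O^-(A)}$ is identical with $f$ replaced by $f^{-1}$, and this completes the proof. The main obstacle is precisely this one-sided density of orbits in a minimal set; everything else is bookkeeping around \cref{prop:minimalqsMf} and \cref{prop:qs}.
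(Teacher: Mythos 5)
Your proof is correct and follows essentially the same route as the paper: the inclusion $\clos{O(A)} \subseteq M_f$ via \cref{prop:minimalqsMf} and invariance of $M_f$, and the reverse inclusions by intersecting $A$ with each $M \in \Mcal_f$ and using density of one-sided orbits in minimal sets. The only difference is that you explicitly justify the one-sided density (via a forward-invariant closed set containing a minimal set), a step the paper's proof takes for granted.
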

\begin{proof}
Let $A$ be a minimal quasi-section.
Then, by \cref{prop:minimalqsMf}, we obtain $\clos{O(A)} \subseteq M_f$.
Conversely, for every $M \in \Mcal_f$, it follows that $A \cap M \nekuu$.
Thus, we obtain $\clos{O^+(A)} \supseteq M$ and $\clos{O^-(A)} \supseteq M$.
Consequently, we obtain $\clos{O^+(A)} \supseteq M_f$ and 
$\clos{O^-(A)} \supseteq M_f$.
The rest of the proof is self-evident.
\end{proof}

\begin{prop}
Every minimal quasi-section is contained in $\Omega_f$.
\end{prop}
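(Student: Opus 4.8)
The plan is to observe that this statement is an immediate corollary of two facts already established in the excerpt, so no new machinery is needed; the entire content is a short chain of inclusions. Specifically, I would invoke \cref{prop:minimalqsMf}, which asserts that every minimal quasi-section $A$ satisfies $A \subseteq M_f$, and then \cref{lem:MfOf}, which asserts $M_f \subseteq \Omega_f$. Composing these gives $A \subseteq \Omega_f$ directly.

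First I would let $A$ be an arbitrary minimal quasi-section and apply \cref{prop:minimalqsMf} to place it inside $M_f = \clos{\bigcup_{M \in \Mcal_f} M}$. Next I would cite \cref{lem:MfOf} to pass from $M_f$ to the non-wandering set $\Omega_f$. Since set inclusion is transitive, the conclusion $A \subseteq \Omega_f$ follows at once, and the proof is complete.

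Since there is essentially no obstacle here, it is worth noting the one conceptual point that makes the chain work: \cref{prop:minimalqsMf} exploits minimality of $A$ in a genuine way (a general quasi-section need not lie in $M_f$, since it may contain extraneous points outside every minimal set), whereas the passage $M_f \subseteq \Omega_f$ is purely topological, relying on the fact that $\Omega_f$ is closed and that every minimal set consists of non-wandering points. Thus the only substantive input is that \emph{minimality} of the quasi-section forces it down onto the closure of the union of minimal sets; once that is granted, containment in $\Omega_f$ is automatic. I would therefore keep the proof to a single sentence combining the two cited results.
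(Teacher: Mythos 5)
Your proof is correct and is exactly the paper's argument: combine \cref{prop:minimalqsMf} ($A \subseteq M_f$ for a minimal quasi-section $A$) with \cref{lem:MfOf} ($M_f \subseteq \Omega_f$) and use transitivity of inclusion. Nothing further is needed.
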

\begin{proof}
We obtain the proof from \cref{prop:minimalqsMf,lem:MfOf}.
\end{proof}

Finally, in this section, we introduce one notion on the quasi-sections
 that is brought about in the theory of Bratteli--Vershik models.
Downarowicz and Karpel \cite{DownarowiczKarpel_2019DecisiveBratteliVershikmodels},
 introduced the decisiveness
 of the ordered Bratteli diagrams and introduced the Bratteli--Vershikizability
 condition
 for zero-dimensional systems.
In one of their main results,
 they clarified that densely aperiodic zero-dimensional systems are
 essential
 parts of Bratteli--Vershikizable zero-dimensional systems
 (\cite[Theorem 3.1]{DownarowiczKarpel_2019DecisiveBratteliVershikmodels}).
They also clarified that for the densely aperiodic systems,
 the emptiness of the interiors of the ``base'' of the Bratteli--Vershik model
 is important
(see \cite[Proposition 1.2]{DownarowiczKarpel_2019DecisiveBratteliVershikmodels}).
In \cite{Shimomura_2020ArXivBratteliVershikModelsFromBasicSets}, we introduced
 the following notion in the case of zero-dimensional systems.
In the present paper, we introduce it in the general topological dynamical settings
 because many of the related arguments can be logically made in these settings.
\begin{defn}\label{defn:condec}
A triple $(X,f,B)$ of a topological dynamical system $(X,f)$
 and a quasi-section is
 \textit{continuously decisive} if $\textrm{int}B = \kuu$.
If a topological dynamical system $(X,f)$ is fixed, then
 we say that a quasi-section $B$ is also \textit{continuously decisive} if
 $\textrm{int}B = \kuu$.
\end{defn}
In \cite[Theorem 5.7]{Shimomura_2020ArXivBratteliVershikModelsFromBasicSets},
 we have shown that
 there exists a one-to-one correspondence
 between the equivalence classes of
continuously decisive ordered Bratteli diagrams
 and the topological conjugacy classes of continuously decisive triples
 of zero-dimensional systems with quasi-sections.
\begin{prop}\label{prop:minimalqscontdic}
Let $(X,f)$ be a densely aperiodic topological dynamical system.
Then, every minimal quasi-section is continuously decisive.
\end{prop}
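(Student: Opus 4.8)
The plan is to argue by contradiction: assume $(X,f)$ is densely aperiodic, that $B$ is a minimal quasi-section, and that $U := \textrm{int}B \nekuu$, and then derive a contradiction by showing that $U$ is a \emph{wandering} open set. The guiding observation is that, by \cref{prop:minimalqsMf} together with \cref{lem:MfOf}, we have $U \subseteq B \subseteq M_f \subseteq \Omega_f$, so every point of $U$ is non-wandering. Hence it suffices to prove $f^n(U) \cap U \iskuu$ for all $n \neq 0$, since this forces every point of the nonempty open set $U$ to be wandering, contradicting $U \subseteq \Omega_f$.

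The core step is the claim that every aperiodic point $x_0 \in U$ satisfies $O(x_0) \cap U = \seb x_0 \sen$, which I would prove by contradiction using the minimality of $B$. Suppose $f^k(x_0) \in U$ for some $k \neq 0$; since $x_0$ is aperiodic we have $x_0 \neq f^k(x_0)$, so (using that $X$ is compact Hausdorff, hence regular) I can choose open sets $W \ni x_0$ and $V \ni f^k(x_0)$ with $\clos{W}, \clos{V} \subseteq U$, with $\clos{W} \cap \clos{V} \iskuu$, and with $f^k(\clos{W}) \subseteq V$ (the last by continuity of $f^k$ after shrinking $W$). I then claim $B \setminus W$ is again a quasi-section: by \cref{qs:minimal} of \cref{prop:qs} it is enough to check that no minimal set $M$ satisfies $M \cap B \subseteq W$. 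If such an $M$ existed, then, since $W \subseteq U \subseteq B$, we would have $M \cap B = M \cap W$; choosing $p \in M \cap W$ gives $f^k(p) \in f^k(\clos{W}) \subseteq V \subseteq B$ and $f^k(p) \in M$, so $f^k(p) \in M \cap B \subseteq W$, contradicting $W \cap V \iskuu$. Thus $B \setminus W$ is a quasi-section, and it is a proper closed subset of $B$ (proper since $\kuu \neq W \subseteq B$), contradicting the minimality of $B$. This establishes the claim.

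To finish, set $U^* := \seb x \in U \mid f^i(x) \notin U \text{ for all } i \neq 0 \sen = U \setminus \bigcup_{i \neq 0} f^{-i}(U)$. Because $f$ is a homeomorphism, $\bigcup_{i \neq 0} f^{-i}(U)$ is open, so $U^*$ is relatively closed in $U$. The claim shows $A_f \cap U \subseteq U^*$, and $A_f \cap U$ is dense in $U$ since $A_f$ is dense in $X$; a relatively closed dense subset of $U$ must equal $U$, so $U^* = U$. This says precisely that $f^i(U) \cap U \iskuu$ for every $i \neq 0$, i.e. $U$ is wandering, which contradicts $\kuu \neq U \subseteq \Omega_f$. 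I expect the main obstacle to be the core claim: the key realization is that one should delete a small neighborhood $W$ of $x_0$ and exploit a \emph{second} orbit point $f^k(x_0) \in U \subseteq B$ to supply, inside every minimal set meeting $W$, a point surviving in $B \setminus W$, and that the global engine producing the contradiction is the containment $M_f \subseteq \Omega_f$ rather than any feature of an individual minimal set.
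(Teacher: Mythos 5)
Your proof is correct, and while it runs on the same engine as the paper's argument---locate two points of a single orbit inside $U := \textrm{int}B$, delete a small neighbourhood of one of them from $B$, and use the other orbit point to show the smaller closed set is still a quasi-section, contradicting minimality---the packaging differs in two genuine ways. First, the paper proves $f^n(U)\cap U = \emptyset$ for all $n\ne 0$ via a case analysis on whether $U$ contains a periodic point, using dense aperiodicity to replace a periodic return by a nearby aperiodic one; you instead prove the no-return property only for aperiodic points of $U$ and then upgrade it to all of $U$ by noting that the set of non-returning points is relatively closed in $U$ and contains the dense subset $A_f\cap U$. This isolates more cleanly the single place where dense aperiodicity is used. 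Second, the endgame differs: having shown $f^n(U)\cap U=\emptyset$ for all $n\ne 0$, the paper observes that $U$ then misses every minimal set, deletes all of $U$ from $B$, and applies minimality a second time, whereas you invoke \cref{prop:minimalqsMf} and \cref{lem:MfOf} to get $U\subseteq B\subseteq M_f\subseteq \Omega_f$ and contradict non-wandering directly. Both closings are valid; yours imports two earlier results that the paper's proof does not need, while the paper's stays self-contained within the deletion trick. One small point you should make explicit in the core claim: when you suppose $M\cap B\subseteq W$, the existence of a point $p\in M\cap W$ rests on $M\cap B\ne\emptyset$, which holds because $B$ is a quasi-section (\cref{qs:minimal} of \cref{prop:qs}).
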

\begin{proof}
Let $B$ be a minimal quasi-section.
Suppose that $U := \textup{int}B \nekuu$.
First, suppose that there exists an $x \in U$ and an $n \ne 0$ such that
 $f^n(x) = x$.
Then, there exists an aperiodic point $y \in U$ that is close to $x$ such that
 $f^n(y) \in U$ because $(X,f)$ is densely aperiodic by the assumption.
Henceforth, there exists an open set $V$ $(y \in V \subseteq U)$ such that
  $V \cap f^n(V) \iskuu$ and $f^n(V) \subseteq U$.
In this case, if we define $B' := B \setminus V$, then 
 $B'$ is a quasi-section because every orbit that passes through $V$ also
 passes through $f^n(V) \subseteq B'$.
This contradicts the minimality of $B$.
Next, suppose that there does not exist any periodic point in $U$.
We shall show that $f^n(U) \cap U \iskuu$ for all $n \ne 0$.
Suppose that there exists an $n \ne 0$ with $f^n(U) \cap U \nekuu$.
Then, there exists an $x \in U$ with $f^n(x) \in U$.
It follows that $x \ne f^n(x)$ because no periodic point exists in $U$.
Therefore, there exists an open set $V$ ($x \in V \subset U$) with
 $V \cap f^n(V) \iskuu$ and $f^n(V) \subseteq U$.
Again, we have a contradiction as in the first case.
Therefore, we obtain $f^n(U) \cap U \iskuu$ for all $n \ne 0$.
This shows that $U \cap M \iskuu$ for all $M \in \Mcal_f$.
Thus, $B' := B \setminus U$ is a quasi-section.
This contradicts the minimality of $B$.
This completes the proof.
\end{proof}

\section{Basic set}\label{sec:basicset}
The basic sets are the quasi-sections
 that are strictly restricted as in \cref{defn:basicset}.
As in the case of the quasi-sections, many studies can be conducted 
 in the general topological dynamics.
Medynets \cite{Medynets_2006CantorAperSysBratDiag} defined basic sets 
 for aperiodic zero-dimensional systems.
Following this, we present
the following definition for general topological dynamics.

\begin{defn}\label{defn:basicset}
Let $(X,f)$ be a topological dynamical system.
A closed set $A \subseteq X$ is called a \textit{basic set}
 if $A$ is a quasi-section and for every $x \in X$, $\abs{O(x) \cap A} \le 1$.
\end{defn}

\begin{rem}
It is natural to state that
 a basic set is continuously decisive if $\textup{int}B \iskuu$
 because a basic set is a quasi-section.
\end{rem}

\begin{prop}\label{prop:bs}
Let $A \subseteq X$ be a closed set.
Then, the following statements are equivalent:
\enumb
\item\label{bs:bs} $A$ is a basic set,
\item\label{bs:orbit} for every $x \in X$, $\abs{O(x) \cap A} \le 1$
 and $\clos{O(x)} \cap A \nekuu$,
\item\label{bs:omega} for every $x \in X$, $\abs{O(x) \cap A} \le 1$
 and $\omega(x) \cap A \nekuu$,
\item\label{bs:alpha}  for every $x \in X$, $\abs{O(x) \cap A} \le 1$
 and $\alpha(x) \cap A \nekuu$, and
\item\label{bs:minimal} for every $x \in X$, $\abs{O(x) \cap A} \le 1$,
 and for every $M \in \Mcal_f$, $M \cap A \nekuu$.
\enumn
\end{prop}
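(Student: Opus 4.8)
The plan is to observe that \cref{prop:bs} follows immediately from \cref{prop:qs}. The key point is that each of the five statements \cref{bs:bs}--\cref{bs:minimal} is the conjunction of one common \emph{cardinality clause}, namely ``$\abs{O(x) \cap A} \le 1$ for every $x \in X$'', with one further clause. In \cref{bs:bs} this further clause is the requirement, built into \cref{defn:basicset}, that $A$ be a quasi-section; in \cref{bs:orbit}, \cref{bs:omega}, \cref{bs:alpha}, and \cref{bs:minimal} it is, respectively, ``$\clos{O(x)} \cap A \nekuu$ for every $x$'', ``$\omega(x) \cap A \nekuu$ for every $x$'', ``$\alpha(x) \cap A \nekuu$ for every $x$'', and ``$M \cap A \nekuu$ for every $M \in \Mcal_f$''.

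First I would note that these five further clauses are exactly statements \cref{qs:qs}--\cref{qs:minimal} of \cref{prop:qs}, which are already known to be equivalent. Since the five statements of \cref{prop:bs} differ only in this further clause, with the same cardinality clause appended to each, the equivalences among \cref{qs:qs}--\cref{qs:minimal} transfer verbatim. Thus I would simply remark that conjoining a fixed hypothesis (the cardinality clause) to each member of a list of mutually equivalent statements yields a list of mutually equivalent statements, and cite \cref{prop:qs} for the underlying equivalences. If an explicit cycle is preferred, one runs \cref{bs:bs} \imply \cref{bs:minimal} \imply \cref{bs:alpha} \imply \cref{bs:orbit} \imply \cref{bs:bs} (and the analogous step \cref{bs:minimal} \imply \cref{bs:omega} \imply \cref{bs:orbit}), invoking the matching implication of \cref{prop:qs} at each arrow while carrying the cardinality clause along unchanged.

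I expect no genuine obstacle here: the substantive content, the equivalence of the covering conditions, was already established in \cref{prop:qs}, and the cardinality clause plays no role in those implications, so it can simply be transported. The only thing to be careful about is the bookkeeping, i.e., matching each clause of \cref{prop:bs} to the correct clause of \cref{prop:qs}; once that correspondence is recorded, the proof is a one-line appeal to \cref{prop:qs}.
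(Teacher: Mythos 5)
Your proposal is correct and matches the paper's approach: the paper's proof is the one-line observation that the statement follows from \cref{defn:basicset} together with \cref{prop:qs}, which is precisely the reduction you describe. Your write-up simply makes explicit the bookkeeping (conjoining the fixed cardinality clause to each of the already-equivalent covering conditions) that the paper leaves implicit.
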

\begin{proof}
By the definition of basic sets and by \cref{prop:qs},
 the proof is self-evident.
\end{proof}

Although the existence of quasi-sections is evident
 for every topological dynamical system,
 basic sets may not exist in the general topological dynamics.

\begin{example}\label{example:rationalrotation}
A rational rotation of the circle $S^1 := \R/\Z$ has no basic set
 if the rotation number is not zero.
To confirm this, suppose that there exists a basic set $B \subseteq S^1$.
Every orbit of the rational rotation is a periodic orbit and,
 for each $a \in S^1$, $\abs{O(a) \cap B} = 1$.
Let $x \in B$.
Then, it is easy to see that for any sufficiently small $\epsilon > 0$,
 every $y \in (x - \epsilon, x+\epsilon)$ also satisfies $y \in B$;
 otherwise, $O(x) \setminus \seb x \sen$ has an accumulation point of
 $B$.
Therefore, $B$ becomes an open set.
Thus, $B$ is a closed and open set in $S^1$.
It follows that $B = S^1$, which is a contradiction.

On the contrary, an irrational rotation of $S^1$ is minimal.
Therefore, every set $\seb x \sen$ ($x \in S^1$) is a basic set.
\end{example}

\begin{example}\label{example:parametrizedrotations}
A densely aperiodic topological dynamical system may not have
 any basic set.
To confirm this, for each $a \in [0,1]$, let $\rho_a : S^1 \to S^1$ be a rotation
 with rotation number $a$.
Let $f : [0,1]\times S^1 \to [0,1] \times S^1$ be a map
 defined as $f(a,x) = (a,\rho_a(x))$ for each $(a,x) \in [0,1] \times S^1$.
Then, evidently, $f$ is densely aperiodic.
However, by the observation in \cref{example:rationalrotation}, $f$ cannot have
 any basic set.

\end{example}

For the zero-dimensional systems that are our main concern,
 the existence of basic sets has been shown
 (see \cite[Theorem 6.5]{Shimomura_2020ArXivBratteliVershikModelsFromBasicSets}).
We provide a short proof in \cref{cor:zds:existbs}.
\begin{rem}
Suppose that $(X,f)$ has a basic set and
 $x \in X$ is an arbitrary point.
Then, a basic set that contains $x$ always exists.
First, suppose that $B$ is a basic set such that $B \cap O(x) \nekuu$.
 Then, there exists an $n \bi$ such that
 $f^n(B) \ni x$.
It is easy to deduce that $f^n(B)$ is also a basic set.
Next, suppose that $B \cap O(x) \iskuu$.
Then, $B \cup \seb x \sen$ is a basic set that contains $x$.
\end{rem}

\begin{example}
Let $(X,f)$ be a minimal topological dynamical system.
Then, it is evident that every closed set $A$ is a quasi-section.
If $x \in X$, then the one-point set $\seb x \sen$ is a basic set.
If one takes a finite set $x_1, x_2, \dotsc,x_n$ from mutually distinct orbits,
then the finite set $\seb x_1, x_2, \dotsc, x_n \sen$ is a basic set.
\end{example}

%

\begin{lem}
Every basic set contains a minimal basic set.
\end{lem}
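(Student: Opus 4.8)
The plan is to run a Zorn's lemma argument on the collection of all basic sets contained in the given basic set $A$, partially ordered by reverse inclusion, and to show that this poset has a maximal element (equivalently, a minimal basic set under ordinary inclusion). This is the same strategy used for the corresponding statement about quasi-sections, with the extra bookkeeping needed to carry the sparseness condition $\abs{O(x) \cap A} \le 1$ through the construction. Since $A$ itself lies in the collection, the poset is non-empty, so it suffices to verify that every chain admits an upper bound in the reverse-inclusion order, i.e.\ a basic set contained in every member of the chain.

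First I would fix a chain $\{A_i\}_{i \in I}$ of basic sets with $A_i \subseteq A$, totally ordered by inclusion, and set $A_\infty := \bigcap_{i \in I} A_i$. Being an intersection of closed sets, $A_\infty$ is closed. The sparseness condition is inherited for free: for each $x \in X$ we have $A_\infty \subseteq A_i$ for any $i$, so $\abs{O(x) \cap A_\infty} \le \abs{O(x) \cap A_i} \le 1$. Thus the only genuine content is to check that $A_\infty$ is still a quasi-section, for which I would use the minimal-set characterization \cref{qs:minimal} of \cref{prop:qs}: it is enough to show $A_\infty \cap M \nekuu$ for every $M \in \Mcal_f$.

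This last point is where the main work lies, and it rests on compactness. Fix $M \in \Mcal_f$. For each $i$ the set $A_i \cap M$ is a non-empty closed subset of the compact space $X$, since each $A_i$ is a basic set and hence a quasi-section. Because $\{A_i\}$ is a chain, the family $\{A_i \cap M\}_{i \in I}$ is also totally ordered by inclusion, so any finite subfamily has a smallest member, which is non-empty; that is, the family has the finite intersection property. By compactness of $X$ we conclude $\bigcap_{i \in I}(A_i \cap M) = A_\infty \cap M \nekuu$. Hence $A_\infty$ meets every minimal set, so it is a quasi-section, and combined with the inherited sparseness condition it is a basic set contained in every $A_i$, serving as the required upper bound.

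With every chain bounded above, Zorn's lemma yields a maximal element of the reverse-inclusion poset, which is precisely a basic set $B \subseteq A$ that is minimal with respect to inclusion; note $B \nekuu$ automatically, since a quasi-section must meet the (existent, by compactness) minimal sets of $X$. By \cref{bs:minimal} of \cref{prop:bs} this $B$ is the desired minimal basic set, completing the proof. The only delicate step is the finite-intersection-property argument guaranteeing that the quasi-section property survives passage to the intersection of a chain; the sparseness clause, by contrast, is preserved trivially.
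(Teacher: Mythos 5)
Your proof is correct and rests on the same mechanism as the paper's: a Zorn's lemma argument whose only real content is the finite-intersection-property/compactness step showing that an intersection of a chain still meets every $M \in \Mcal_f$, via \cref{qs:minimal} of \cref{prop:qs}. The paper merely packages this differently, citing the (identically proved) fact that every quasi-section contains a minimal quasi-section and observing that a minimal quasi-section inside a basic set inherits the condition $\abs{O(x) \cap A} \le 1$ and is therefore already a minimal basic set, so your direct rerun of Zorn on basic sets is just the same argument with that reduction unrolled.
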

\begin{proof}
From the definition of basic sets and the fact that every quasi-section
 contains a minimal quasi-section,
 the proof is self-evident.
\end{proof}
\begin{lem}\label{lem:pointsfromms}
Let $B$ be a basic set.
For each $M \in \Mcal_f$, take an $x_M \in B \cap M$.
It follows that $\overline{\seb x_M : M \in \Mcal \sen} \subseteq B$
 is also a basic set.
In particular, if $B$ is a minimal basic set,
 then it follows that $B = \overline{\seb x_M : M \in \Mcal_f \sen}$.
\end{lem}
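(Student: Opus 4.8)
The plan is to set $A' := \clos{\seb x_M : M \in \Mcal_f \sen}$ and to verify directly that $A'$ meets the two requirements of \cref{defn:basicset}, namely that $A'$ is a quasi-section and that $\abs{O(x) \cap A'} \le 1$ for every $x \in X$. The guiding observation is that both requirements are inherited almost for free from $B$: the orbit-counting condition is monotone under passage to subsets, and the quasi-section property can be checked through the minimal-set criterion, which $A'$ satisfies by its very construction.

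First I would record that $A' \subseteq B$. Each chosen point $x_M$ lies in $B$, and $B$ is closed, so the closure of $\seb x_M : M \in \Mcal_f \sen$ is still contained in $B$. (The points $x_M$ are available because $B$, being a basic set, is in particular a quasi-section, so $B \cap M \noemp$ for every $M \in \Mcal_f$ by \cref{qs:minimal} of \cref{prop:qs}.) Next, to see that $A'$ is a quasi-section, I would again use \cref{qs:minimal} of \cref{prop:qs}: it suffices that $A' \cap M \noemp$ for every $M \in \Mcal_f$, and this holds since $x_M \in A' \cap M$ by construction. For the orbit-counting condition I would argue by monotonicity: for any $x \in X$ we have $O(x) \cap A' \subseteq O(x) \cap B$, while $\abs{O(x) \cap B} \le 1$ because $B$ is a basic set, so $\abs{O(x) \cap A'} \le 1$. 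Hence $A'$ is a basic set contained in $B$.

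The final assertion is then immediate: if $B$ is a minimal basic set, then $A'$ is a basic set with $A' \subseteq B$, so the minimality of $B$ forces $A' = B$. I do not expect any genuine obstacle in this argument; it is a direct verification. The only points that require a little care are to check the quasi-section property of $A'$ via the minimal-set characterization of \cref{prop:qs} rather than by unwinding the defining condition $O(U) = X$ for open $U \supseteq A'$ by hand, and to note that the orbit-counting condition transfers to the subset $A'$ with no extra hypotheses.
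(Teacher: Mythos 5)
Your proof is correct and follows essentially the same route as the paper: the paper's entire proof is ``Apply \cref{prop:bs},'' and your argument is precisely the verification of condition \cref{bs:minimal} of that proposition (the orbit-counting bound inherited by the subset $A' \subseteq B$, and $A' \cap M \neq \emptyset$ by construction), together with the immediate minimality conclusion. No discrepancies.
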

\begin{proof}
Apply \cref{prop:bs}.

\end{proof}
\begin{example}
For each $M \in \Mcal_f$, take and fix an $x_M \in M$.
Then, even if the set
 $B := \clos{\seb x_M \mid M \in \Mcal_f \sen}$ is a basic
 set, $B$ may not be a minimal basic set.
To confirm this,
 let $C$ be the Cantor set and $(X, f)$ be a minimal
 topological dynamical system.
Take and fix two points $x_1,x_2 \in X$ from distinct orbits.
Let us consider a topological dynamical system
 $f\times id$ as $(f \times id) (x,y) = (f(x),y)$.
Then, $\Mcal_{(f \times id)} = \seb X \times \seb y \sen \mid y \in C \sen$.
Take and fix a $c \in C$.
Let $x_c = (x_2,c)$, and for each $y \in C$ with $y \ne c$,
 let $x_y := (x_1,y)$.
Then, $B := \clos{ \seb x_y \mid y \in C \sen} = \seb (x_1,y) \mid y \in C \sen \cup \seb (x_2,c) \sen$ is a basic set.
However, $\seb (x_1,y) \mid y \in C \sen \subsetneq B$
 is the only minimal basic set that is contained in $B$.
\end{example}

\begin{example}\label{example:minimalbs2pt}
Even if $B$ is a minimal basic set, \cref{lem:pointsfromms}
 does not imply that $\abs{B \cap M} = 1$ for all $M \in \Mcal_f$. 
To confirm this, let $(X, f)$ be a minimal set and $C$
 be the Cantor set in the interval $[0, 1]$.
Let us
 consider a zero-dimensional system
 $f\times id$ as $(f \times id) (x,y) = (f(x),y)$.
Take $x,y \in X$ from distinct orbits and take $a \in C$ such that
 neither $[0,a) \cap C$ nor $(a,1] \cap C$ is closed in $C$.
Then, the set
 $B :=
 (\seb x \sen \times (C \cap [0,a])) \cup (\seb y \sen \times (C \cap [a,1]))$
 is a minimal basic set, and 
 the set $(X \times \seb a \sen) \cap B$
 contains two points, $(x,a)$ and $(y,a)$.
\end{example}

\begin{rem}
In the above example, if $x$ and $y$ are in the same orbit, then
 the set $B$ defined above is not a basic set;
 however, it is a minimal quasi-section.
Therefore, a minimal quasi-section is not necessarily a basic set.
\end{rem}

\begin{lem}\label{lem:intBwander}
Let $(X,f)$ be a densely aperiodic topological dynamical system.
Suppose that $B$ is a basic set.
Then, for every $n \ne 0$, $f^n(\textup{int}B) \cap \textup{int}B \iskuu$.
In particular, every point $x \in \textup{int}B$ is wandering.
\end{lem}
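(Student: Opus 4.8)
The plan is to reduce everything to the defining property of a basic set — that each orbit meets $B$ in at most one point — and then to invoke dense aperiodicity to eliminate the periodic obstruction. Writing $U := \textup{int}B$, I may assume $U \nekuu$, since otherwise both assertions are vacuous.

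First I would establish the main claim $f^n(U) \cap U \iskuu$ for all $n \ne 0$ by contradiction. Suppose $f^n(U) \cap U \nekuu$ for some $n \ne 0$. Since $f$ is a homeomorphism, $V := U \cap f^{-n}(U)$ is open, and it is nonempty: any $y \in f^n(U) \cap U$ has the form $y = f^n(z)$ with $z \in U$, and since $y \in U$ we get $z \in f^{-n}(U)$, hence $z \in V$. Now I would use that $(X,f)$ is densely aperiodic: because $A_f$ is dense and $V$ is a nonempty open set, there exists an aperiodic point $x \in V$. For this $x$ we have $x \in U \subseteq B$ and $f^n(x) \in U \subseteq B$, while $x \ne f^n(x)$ since $x$ is aperiodic and $n \ne 0$. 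Thus $x$ and $f^n(x)$ are two distinct points of $O(x) \cap B$, so $\abs{O(x) \cap B} \ge 2$, contradicting that $B$ is a basic set. This forces $f^n(U) \cap U \iskuu$ for every $n \ne 0$.

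The ``in particular'' statement then follows immediately: for any $x \in U$, the open set $U$ itself is a neighborhood of $x$ with $f^n(U) \cap U \iskuu$ for all $n \ne 0$, so $x$ is wandering by definition.

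The only genuinely delicate point is the role of dense aperiodicity, and I expect this to be the conceptual crux even though the step is short. Without that hypothesis, $U$ could consist of periodic points with $f^n(x) = x$, in which case $f^n(U) \cap U = U \nekuu$ while each orbit still meets $B$ in a single point — so the conclusion would simply be false. The entire weight of the argument rests on extracting an aperiodic point from $V$, which guarantees $x \ne f^n(x)$ and thereby produces two distinct orbit points inside $B$; this is precisely where the basic-set condition is stronger than the mere quasi-section condition used in \cref{prop:minimalqscontdic}, allowing a cleaner argument that avoids carving out a smaller return set. The remaining steps are routine point-set manipulations relying only on $f$ being a homeomorphism.
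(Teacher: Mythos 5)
Your argument is correct and is essentially the paper's own proof: both extract an aperiodic point from the nonempty open set witnessing $f^n(U)\cap U\nekuu$ (you phrase it via $U\cap f^{-n}(U)$, the paper via $f^n(U)\cap U$ directly) and derive $\abs{O(x)\cap B}\ge 2$, contradicting the basic-set condition. The deduction of the ``in particular'' clause is likewise identical.
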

\begin{proof}
If $\textup{int}B \iskuu$, then the statement is self-evident.
Suppose that $U := \textrm{int} B \nekuu$.
Suppose, on the contrary, that there exists an $n \ne 0$ such that
 $f^n(U) \cap U \nekuu$.
By the assumption that $\clos{A_f} = X$, it follows that
 there exists an $x \in f^n(U) \cap U \cap A_f$.
Then, it follows that $\abs{O(x) \cap B} \ge 2$, which is a contradiction.
\end{proof}

\begin{lem}\label{lem:intBnominimal}
Let $(X,f)$ be a densely aperiodic topological dynamical system.
Suppose that $B$ is a basic set.
Then, $\textup{int}B \cap M \iskuu$ for all $M \in \Mcal_f$.
\end{lem}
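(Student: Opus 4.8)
The plan is to obtain this directly from \cref{lem:intBwander} combined with the elementary fact, already used in the proof of \cref{lem:MfOf}, that every minimal set consists of non-wandering points. Since \cref{lem:intBwander} tells us that under the densely aperiodic hypothesis every point of $\textup{int}B$ is wandering, while every point of a minimal set is non-wandering, the two conditions cannot hold simultaneously; so the overlap $\textup{int}B \cap M$ must be empty.

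First I would record that $M \subseteq \Omega_f$ for every $M \in \Mcal_f$, exactly as noted in the proof of \cref{lem:MfOf}. Concretely, if $x \in M$ and $V \ni x$ is any open neighborhood in $X$, then $V \cap M$ is a nonempty relatively open subset of $M$, and since the orbit of $x$ is dense in $M$ there is some $n \ne 0$ with $f^n(x) \in V$; applying $f^{-n}$ gives $f^{-n}(V) \cap V \nekuu$, so $x$ is not wandering. Hence $M \subseteq \Omega_f$.

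Next I would invoke \cref{lem:intBwander}, which asserts that every point of $\textup{int}B$ is wandering, i.e.\ $\textup{int}B \cap \Omega_f \iskuu$. Combining the two inclusions, for any $M \in \Mcal_f$ we get
\[
\textup{int}B \cap M \subseteq \textup{int}B \cap \Omega_f \iskuu,
\]
which is the desired conclusion.

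I do not anticipate any serious obstacle, as the statement is essentially the conjunction of two already-established facts. The only point demanding a little care is the direction of the wandering condition: one must check that density of the orbit of $x$ \emph{in $M$} really produces a return $f^n(x) \in V$ for a full-space neighborhood $V$, not merely for a neighborhood taken within $M$. This is immediate, since $V \cap M$ is a nonempty relatively open subset of $M$ that the dense orbit must eventually enter.
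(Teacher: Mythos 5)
Your proposal is correct and follows essentially the same route as the paper: the paper also deduces the claim from \cref{lem:intBwander}, observing that a point $x \in \textup{int}B \cap M$ would satisfy $x \in \omega(x)$ (hence be non-wandering), contradicting that every point of $\textup{int}B$ is wandering. Your slightly more explicit verification that $M \subseteq \Omega_f$ is fine and matches the fact already used in \cref{lem:MfOf}.
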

\begin{proof}
Suppose that $\textup{int}B \cap M \nekuu$ for some $M \in \Mcal_f$.
Take an $x \in \textup{int}B \cap M$.
Then, it follows that $x \in \omega(x)$.
This contradicts \cref{lem:intBwander}.
\end{proof}

Consequently, we obtain the following.

\begin{prop}
If $(X,f)$ is a densely aperiodic topological dynamical system with
 $M_f = X$,
then every basic set is continuously decisive.
\end{prop}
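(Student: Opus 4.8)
The plan is to reduce the statement immediately to \cref{lem:intBnominimal}, which has just been established, and then exploit the hypothesis $M_f = X$ through a density argument. Let $B$ be a basic set and write $U := \textup{int}B$, an open subset of $X$. The goal is to show $U \iskuu$.

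First I would invoke \cref{lem:intBnominimal}: since $(X,f)$ is densely aperiodic and $B$ is a basic set, we have $U \cap M \iskuu$ for every $M \in \Mcal_f$. Taking the union over all minimal sets, this gives $U \cap \bigl(\bigcup_{M \in \Mcal_f} M\bigr) \iskuu$; that is, the open set $U$ is disjoint from the set $\bigcup_{M \in \Mcal_f} M$.

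The remaining step is to turn disjointness-from-the-minimal-sets into emptiness using $M_f = X$. By \cref{nota:minimalset}, $M_f = \clos{\bigcup_{M \in \Mcal_f} M}$, so the hypothesis $M_f = X$ says precisely that $\bigcup_{M \in \Mcal_f} M$ is dense in $X$. An open set that is disjoint from a dense subset must be empty: if $U \nekuu$, then $U$ would be a nonempty open set meeting no point of a dense set, contradicting density. Hence $U = \textup{int}B \iskuu$, which is exactly the statement that $B$ is continuously decisive in the sense of \cref{defn:condec}. Since $B$ was an arbitrary basic set, every basic set is continuously decisive.

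I do not anticipate any genuine obstacle here, as the proposition is essentially a corollary packaging \cref{lem:intBnominimal} together with the hypothesis $M_f = X$; the only point requiring care is the last observation that an open set disjoint from a dense set is empty, which is where $M_f = X$ (and not merely $M_f \subseteq X$) is used in an essential way.
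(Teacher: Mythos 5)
Your argument is correct and is essentially the paper's own proof: both invoke \cref{lem:intBnominimal} to get $\textup{int}B \cap M = \emptyset$ for all $M \in \Mcal_f$ and then conclude from $M_f = X$ that $\textup{int}B$ is empty. You merely spell out the final step (an open set disjoint from the dense union $\bigcup_{M \in \Mcal_f} M$ must be empty) that the paper leaves implicit.
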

\begin{proof}
Let $(X,f)$ be a densely aperiodic topological dynamical system and
 $B$ be a basic set.
From \cref{lem:intBnominimal}, we have $\textup{int}B \cap M \iskuu$
 for all $M \in \Mcal_f$.
This implies that $\textup{int}B \iskuu$ because $M_f = X$.
\end{proof}
\begin{rem}
If a densely aperiodic topological dynamical system $(X,f)$ has a dense set of periodic orbits, then every basic set
 is continuously decisive.
\end{rem}

\begin{thm}
Let $(X,f)$ be a topological dynamical system.
Suppose that $(X,f)$ is densely aperiodic and $B$ is a basic set.
Then, $B' := B \setminus \textup{int}B$ is a continuously decisive basic set.
\end{thm}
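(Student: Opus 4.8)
The plan is to verify directly that $B' := B \setminus \textup{int}B$ meets the definition of a continuously decisive basic set, namely that it is (i) a quasi-section, (ii) a set meeting each orbit at most once, and (iii) of empty interior. I would first record the trivial topological facts: $B'$ is closed, since $B$ is closed and $\textup{int}B$ is open; and the orbit condition is inherited from $B$, because $B' \subseteq B$ gives $\abs{O(x) \cap B'} \le \abs{O(x) \cap B} \le 1$ for every $x \in X$, using that $B$ is a basic set. So conditions (ii) is immediate.

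The substantive content is condition (i), and this is the only place where the densely aperiodic hypothesis enters. By the minimal-set characterization \cref{qs:minimal} of \cref{prop:qs}, it suffices to show $B' \cap M \nekuu$ for every $M \in \Mcal_f$. I would fix such an $M$ and pick $p \in B \cap M$, which is nonempty because $B$ is itself a quasi-section. The key observation is that \cref{lem:intBnominimal} gives $\textup{int}B \cap M \iskuu$, so $p \notin \textup{int}B$; hence $p \in B \setminus \textup{int}B = B'$, and therefore $p \in B' \cap M$. This establishes that $B'$ is a quasi-section, and combined with (ii) it follows that $B'$ is a basic set (note this also shows $B' \nekuu$, since $\Mcal_f$ is nonempty by compactness).

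Finally, condition (iii) is a purely topological matter requiring no dynamics: if $W$ were a nonempty open set with $W \subseteq B'$, then from $W \subseteq B$ I would conclude $W \subseteq \textup{int}B$, since $\textup{int}B$ is the largest open subset of $B$; but $W \subseteq B' = B \setminus \textup{int}B$ forces $W \cap \textup{int}B \iskuu$, and together these give $W \iskuu$, a contradiction. Hence $\textup{int}B' \iskuu$, so $B'$ is continuously decisive.

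I do not expect a genuine obstacle here: the theorem is essentially a corollary of the preceding lemmas. The one step that looks like it could require work, the quasi-section verification, is entirely absorbed by \cref{lem:intBnominimal}, whose proof already handled the interaction between $\textup{int}B$ and the minimal sets under dense aperiodicity. Once that lemma is invoked, the remaining assembly is formal.
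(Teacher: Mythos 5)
Your proof is correct and follows essentially the same route as the paper's: closedness and the orbit bound are inherited from $B$, the quasi-section property follows because $\textup{int}B$ misses every minimal set under dense aperiodicity (the paper cites \cref{lem:intBwander} directly where you cite its corollary \cref{lem:intBnominimal}, but the content is identical), and the emptiness of $\textrm{int}B'$ is the same elementary topological observation, which you spell out in slightly more detail than the paper does.
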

\begin{proof}
It is clear that $B'$ is a closed set
 and $\abs{B' \cap O(x)} \le 1$ for all $x \in X$.
It is also clear that $\textup{int}B' \iskuu$.
We need to show that $B' \cap M \nekuu$ for every $M \in \Mcal_f$.
Fix an $M \in \Mcal_f$.
It then follows that $B \cap M \nekuu$; however, it also follows that
 $M \cap \textup{int}B \iskuu$ owing to \cref{lem:intBwander}.
Thus, we obtain $B' \cap M \nekuu$, as desired.
\end{proof}

\begin{thm}
Let $(X,f)$ be a topological dynamical system.
Suppose that $(X,f)$ is densely aperiodic
 and $\Omega_f = X$.
Then, every basic set is continuously decisive.
\end{thm}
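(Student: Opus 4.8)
The plan is to argue by contradiction and reduce the statement immediately to \cref{lem:intBwander}. Suppose $B$ is a basic set with $\textup{int}B \nekuu$, and pick any point $x \in \textup{int}B$. Since $(X,f)$ is densely aperiodic, the hypotheses of \cref{lem:intBwander} are met, and that lemma tells us that every point of $\textup{int}B$ is wandering; in particular $x$ is wandering. But the assumption $\Omega_f = X$ says precisely that $X$ contains no wandering points, so $x \in \Omega_f$ is non-wandering. This contradiction forces $\textup{int}B \iskuu$, which is exactly the assertion that $B$ is continuously decisive.

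The point worth isolating is that \emph{all} the substantive work has already been absorbed into \cref{lem:intBwander}. Recall that a point $x$ is wandering when some open $U \ni x$ satisfies $f^n(U) \cap U \iskuu$ for every $n \ne 0$. The content of \cref{lem:intBwander} is that the open set $U := \textup{int}B$ is itself such a neighborhood of each of its points: dense aperiodicity forbids any nontrivial return $f^n(U) \cap U \nekuu$, since such a return would supply an aperiodic point whose orbit meets $B$ twice, contradicting the basic-set condition $\abs{O(x) \cap B} \le 1$. Consequently, the theorem is a direct corollary of \cref{lem:intBwander} once one recognizes that the equation $\Omega_f = X$ is simply the negation of the existence of wandering points.

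I expect no genuine obstacle here; the two hypotheses are exactly the two inputs the argument needs and nothing more. The only subtlety to keep in mind is the logical direction: \cref{lem:intBwander} does not by itself assert that $\textup{int}B$ is empty — it only locates its points among the wandering ones — so the emptiness must be extracted from the additional hypothesis $\Omega_f = X$ rather than from the lemma in isolation. Combining the two therefore yields $\textup{int}B \iskuu$ for every basic set $B$, completing the proof.
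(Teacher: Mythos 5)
Your argument is correct and is essentially identical to the paper's proof: both apply \cref{lem:intBwander} to conclude that every point of $\textup{int}B$ is wandering and then invoke $\Omega_f = X$ to force $\textup{int}B \iskuu$. The contradiction framing is only a cosmetic difference.
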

\begin{proof}
Let $B$ be a basic set of $(X,f)$.
By \cref{lem:intBwander}, it follows that every point in $\textrm{int}B$ is 
 wandering.
Thus, by the assumption that $\Omega_f = X$, it follows that 
 $\textrm{int} B \iskuu$.
\end{proof}

\section{Extremal basic set}
In this section, we consider only zero-dimensional systems, which
 are our main concern.
We also show that there exist basic sets in every zero-dimensional system.
To do this, we embed $X$ into the real line $\R$, i.e.,
 $X \subset \R$. In particular,
 $X$ is linearly ordered, and the order topology coincides 
 with the original topology in $X$.

\begin{nota}
We use the notations 
$\inf_f(x) := \inf \seb y \mid y \in O(x) \sen$ and
 $\inf_f := \seb \inf_f(x) \mid x \in X \sen$.
\end{nota}

We proved a few basic properties in \cite[Lemma 6.3]{Shimomura_2020ArXivBratteliVershikModelsFromBasicSets}.
For every $x \in X$, it follows that
 $\inf_f(x) \le x$ and $\inf_f(x) \in \overline{O(x)}$.
Furthermore, we obtain the following.
\begin{lem}\label{lem:basicinf}
Let $x \in \inf_f$.
 Then, it follows that $\inf_f(x) = x$.
\end{lem}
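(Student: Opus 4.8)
The plan is to use the fact, recorded just before the statement, that $\inf_f(x) \le x$ holds for every $x \in X$; hence it suffices to establish the reverse inequality $\inf_f(x) \ge x$ under the hypothesis $x \in \inf_f$. Concretely, writing $x = \inf_f(z)$ for some $z \in X$, I would show that $x$ is the minimum of its own orbit, i.e. that $f^n(x) \ge x$ for every $n \in \Z$; this at once yields $\inf_f(x) = \inf\{\,f^n(x) \mid n \in \Z\,\} \ge x$, and combining with $\inf_f(x) \le x$ finishes the proof.

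First I would unwind the hypothesis. Since $x = \inf_f(z) = \inf\{\,y \mid y \in O(z)\,\}$, every point $w$ of the orbit $O(z)$ satisfies $w \ge x$, so $O(z)$ is bounded below by $x$. Moreover, by the quoted property applied to $z$, we have $x = \inf_f(z) \in \overline{O(z)}$. Because a zero-dimensional system is compact metrizable, I may therefore choose a sequence $w_k \in O(z)$ with $w_k \to x$.

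The key step is to transport this lower bound through the dynamics. Fix $n \in \Z$. The orbit $O(z)$ is $f$-invariant, so $f^n(w_k) \in O(z)$ and hence $f^n(w_k) \ge x$ for every $k$. Since $f^n$ is a homeomorphism, continuity gives $f^n(w_k) \to f^n(x)$, and passing to the limit in the inequalities $f^n(w_k) \ge x$ yields $f^n(x) \ge x$. As $n \in \Z$ was arbitrary, every point of $O(x)$ lies at or above $x$, so $\inf_f(x) \ge x$, as required.

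I do not anticipate a genuine obstacle here: the content is elementary once the right quantities are in play. The only point requiring care is that the order on $X$ is the one induced by the embedding $X \subset \R$, so that ``infimum'' and the comparisons $\ge$ are the usual ones on $\R$ and are preserved under taking limits; granting that, the whole argument reduces to the observation that the continuous image $f^n$ of orbit points bounded below by $x$ is again bounded below by $x$. (One may note that when $z$ is periodic the sequence $w_k$ can be taken constant, so no special case is needed.)
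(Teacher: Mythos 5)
Your proof is correct and is essentially the paper's argument in contrapositive form: both rest on the two facts that $O(z)$ is bounded below by $x=\inf_f(z)$ and that $x\in\overline{O(z)}$, combined with continuity of $f^n$. Where you pass to the limit in $f^n(w_k)\ge x$ to get $f^n(x)\ge x$ directly, the paper instead assumes $f^n(x)<x$ and pushes a nearby orbit point $f^m(z)$ below $x$ to reach a contradiction; the content is the same.
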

\begin{proof}
Let $x \in \inf_f$.
Then, there exists a $y \in X$ with $x = \inf_f(y)$.
Evidently, it follows that $\inf_f(\inf_f(y)) \le \inf_f(y)$.
We need to show $\inf_f(\inf_f(y)) = \inf_f(y)$.
Suppose, on the contrary, that $\inf_f(\inf_f(y)) < \inf_f(y)$.
Then, there exists an $n \bi$ such that
 $f^n(\inf_f(y)) < \inf_f(y)$.
If one chooses an $m \bi$ such that
 $f^m(y)$ is sufficiently close to $\inf_f(y)$,
 then one obtains $f^{n+m}(y) = f^n(f^m(y)) < \inf_f(y)$.
This contradicts the definition of $\inf_f(y)$.
\end{proof}

\begin{lem}\label{lem:inforbitinf}
Let $x_n \to x$ ($n \to \infty$) be a sequence such that
 there exists a sequence $y_n$ ($n= 1,2,\dots$) with
 $\inf_f(y_n) = x_n$ ($n = 1,2,\dots$).
Then, it follows that $\inf_f(x) = x$.
\end{lem}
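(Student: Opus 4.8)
The plan is to prove $\inf_f(x)=x$ by establishing the only nontrivial inequality, namely $\inf_f(x)\ge x$, since $\inf_f(x)\le x$ holds for every point by the basic properties recalled just before \cref{lem:basicinf}. First I would record that each $x_n$ is itself a fixed point of $\inf_f(\cdot)$: by hypothesis $x_n=\inf_f(y_n)$, so $x_n\in\inf_f$, and then \cref{lem:basicinf} gives $\inf_f(x_n)=x_n$ for every $n$. This is the only place the hypothesis about the auxiliary sequence $y_n$ enters; everything afterward is about the limit $x$ and the points $x_n$ alone.

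Next I would argue by contradiction, assuming $\inf_f(x)<x$. Because $\inf_f(x)=\inf\seb f^k(x)\mid k\bi\sen$ lies strictly below $x$, there is a single index $k\bi$ with $f^k(x)<x$; set $\delta:=x-f^k(x)>0$. The idea is to transfer this strict inequality from $x$ to the nearby points $x_n$ using continuity of the homeomorphism $f^k$ together with $x_n\to x$. Concretely, on a small enough neighborhood of $x$ the map $f^k$ changes values by less than $\delta/3$ and points differ from $x$ by less than $\delta/3$, so for all sufficiently large $n$ one has simultaneously $f^k(x_n)<f^k(x)+\delta/3=x-\tfrac{2}{3}\delta$ and $x_n>x-\tfrac{1}{3}\delta$, whence $f^k(x_n)<x-\tfrac{2}{3}\delta<x-\tfrac{1}{3}\delta<x_n$.

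This forces $\inf_f(x_n)\le f^k(x_n)<x_n$ for all large $n$, directly contradicting $\inf_f(x_n)=x_n$ obtained in the first step. Hence the assumption $\inf_f(x)<x$ is untenable, so $\inf_f(x)\ge x$, and combined with $\inf_f(x)\le x$ we conclude $\inf_f(x)=x$. The main obstacle is purely bookkeeping: one must extract a single concrete orbit index $k$ with $f^k(x)<x$ from the strict inequality $\inf_f(x)<x$ and then control $f^k(x_n)$ and $x_n$ \emph{simultaneously} so as to produce the strict inequality $f^k(x_n)<x_n$. Once $k$ is fixed, this is a routine $\varepsilon$--$\delta$ estimate, and no use of compactness, minimality, or zero-dimensionality is needed beyond continuity of $f$ and the order structure already assumed on $X\subset\R$.
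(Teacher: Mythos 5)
Your proposal is correct and follows essentially the same route as the paper's own proof: assume $\inf_f(x)<x$, extract an index $k$ with $f^k(x)<x$, use continuity of $f^k$ and $x_n\to x$ to get $f^k(x_n)<x_n$ for large $n$, and contradict $\inf_f(x_n)=x_n$ from \cref{lem:basicinf}. The only difference is cosmetic bookkeeping ($\delta/3$ versus the paper's choice of $\ep$ with $f^k(x)+\ep<x-\ep$).
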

\begin{proof}
Let $x_n$ ($n = 1,2,\dots$) as above.
Suppose that $\inf_f(x) < x$.
Then, there exists an $i \bi$ such that $f^i(x) < x$.
Take an $\ep > 0$ such that $f^i(x) + \ep < x - \ep$.
It follows that $f^i(x_n) < f^i(x) + \ep$ for every sufficiently large $n$.
On the other hand, $x - \ep < x_n$ for every sufficiently large $n$.
Therefore, we obtain $f^i(x_n) < x_n$ for a sufficiently large $n$.
This contradicts \cref{lem:basicinf}.
\end{proof}

In \cite{Shimomura_2020ArXivBratteliVershikModelsFromBasicSets}, we obtained 
 the following.

\begin{thm}\label{thm:extremal}
Suppose that $(X,f)$ is a zero-dimensional system.
Then, the set
 $\inf_f$
 is a basic set.
\end{thm}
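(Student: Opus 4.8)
The plan is to verify directly, for $A := \inf_f$, the three defining requirements of a basic set in \cref{defn:basicset}, drawing entirely on the two preceding lemmas and the recorded elementary facts $\inf_f(x) \le x$ and $\inf_f(x) \in \clos{O(x)}$. The observation that organizes everything is that, by \cref{lem:basicinf}, the set $\inf_f$ coincides with the fixed-point set $\seb z \in X \mid \inf_f(z) = z \sen$: indeed, $z \in \inf_f$ forces $\inf_f(z) = z$ by that lemma, while conversely $\inf_f(z) = z$ exhibits $z$ as a value of the map $\inf_f(\ccc)$, whence $z \in \inf_f$. I would record this equivalence at the outset, since both the closedness and the ``at most one point per orbit'' conditions flow from it.

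First I would show that $\inf_f$ is closed. Take a sequence $x_n \in \inf_f$ with $x_n \to x$. Because each $x_n$ lies in $\inf_f$, we have $x_n = \inf_f(x_n)$, so taking the witnessing sequence $y_n := x_n$ places us exactly in the hypothesis of \cref{lem:inforbitinf}, which yields $\inf_f(x) = x$, i.e. $x \in \inf_f$. Hence $\inf_f$ is closed and is therefore eligible to be a quasi-section. Next I would obtain the quasi-section property through \cref{prop:qs}: for an arbitrary $x \in X$, the recorded fact $\inf_f(x) \in \clos{O(x)}$ together with $\inf_f(x) \in \inf_f$ gives $\clos{O(x)} \cap \inf_f \nekuu$, which is precisely condition \cref{qs:orbit} of \cref{prop:qs}.

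Finally, for the cardinality condition $\abs{O(x) \cap \inf_f} \le 1$, I would exploit that $\inf_f(\ccc)$ is constant along orbits: if $z_1, z_2 \in O(x) \cap \inf_f$, then $O(z_1) = O(z_2)$ as sets, so $\inf_f(z_1) = \inf_f(z_2)$ straight from the definition, and the fixed-point characterization gives $z_1 = \inf_f(z_1) = \inf_f(z_2) = z_2$. Thus every orbit meets $\inf_f$ in at most one point. Combining the closedness with the last two steps, the equivalence \cref{bs:orbit} of \cref{prop:bs} certifies that $\inf_f$ is a basic set.

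I do not anticipate a serious obstacle, as the genuine content has already been isolated into \cref{lem:basicinf,lem:inforbitinf}. The only places that demand a little care are the closedness step, where one must match the hypothesis of \cref{lem:inforbitinf} by choosing the witnessing sequence $y_n = x_n$, and the recognition that $\inf_f(\ccc)$ depends only on the orbit, which is exactly what renders the at-most-one-point condition automatic once the fixed-point description is in hand.
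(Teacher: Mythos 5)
Your proposal is correct and follows essentially the same route as the paper: closedness via \cref{lem:inforbitinf}, the quasi-section property from $\inf_f(x) \in \clos{O(x)}$ (you use condition \cref{qs:orbit} of \cref{prop:qs} where the paper checks \cref{qs:minimal}, an immaterial difference), and the at-most-one-point-per-orbit condition from the fact that $\inf_f(\ccc)$ depends only on the orbit together with \cref{lem:basicinf}. The explicit identification of $\inf_f$ with the fixed-point set of the map $\inf_f(\ccc)$ is a tidy way to package what the paper does implicitly.
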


\begin{proof}
By \cref{lem:inforbitinf}, we know that $\inf_f$ is closed.
Next, we show that $\inf_f$ is a quasi-section.
Let $M \in \Mcal_f$.
We need to show that $\inf_f \cap M \nekuu$.
Let $x \in M$.
Then, it follows that $\inf_f(x) \in \inf_f \cap M$, as desired.
Finally, we need to show that $\abs{O(x) \cap \inf_f} \le 1$ for every
 $x \in X$.
Take $x,y \in \inf_f$ such that $O(x) \ni y$.
Then, by \cref{lem:basicinf}, 
 we obtain $\inf_f(x) = x$ and $\inf_f(y) = y$.
We need to show that $x = y$.
However, this is evident from
 $x = \inf_f(x) = \inf(O(x)) = \inf(O(y)) = \inf_f(y) = y$.
\end{proof}

Now, we have the following.
\begin{cor}\label{cor:zds:existbs}
Every zero-dimensional system has a basic set.
\end{cor}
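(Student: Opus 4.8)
The plan is to read off the corollary directly from \cref{thm:extremal}, since that theorem already produces an explicit basic set, namely $\inf_f$, for any zero-dimensional system once the order embedding of this section is in place. Thus almost nothing remains to be proved: the entire content of the corollary is that the hypotheses of \cref{thm:extremal} are met by \emph{every} zero-dimensional system, and the only hypothesis beyond the words ``zero-dimensional system'' is the availability of the embedding $X \subset \R$ fixed at the start of the section.

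First I would recall that, by definition, the phase space $X$ of a zero-dimensional system is a compact metrizable zero-dimensional space. Such a space embeds homeomorphically into the Cantor set $C \subset [0,1]$, hence into $\R$, and the embedding can be chosen so that the linear order inherited from $\R$ induces an order topology on $X$ coinciding with its original topology. This is precisely the standing setup under which $\inf_f(x) := \inf \seb y \mid y \in O(x) \sen$ and the set $\inf_f$ were defined, so \cref{lem:basicinf}, \cref{lem:inforbitinf}, and \cref{thm:extremal} all apply verbatim.

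With the embedding fixed, I would simply invoke \cref{thm:extremal}: the set $\inf_f$ is a basic set of $(X,f)$. In particular a basic set exists, which is the assertion of the corollary. No further computation is needed, and the proof is complete.

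The only step that carries any real weight---and hence the ``main obstacle,'' though it is a mild one---is the embedding: one must know that a compact metrizable zero-dimensional space genuinely embeds in $\R$ as a set on which the order topology and the subspace topology agree, for otherwise $\inf_f$ need not be well behaved. This is exactly where zero-dimensionality and metrizability are used in an essential way, and it is consistent with \cref{example:parametrizedrotations}, where a densely aperiodic but \emph{non}-zero-dimensional system admits no basic set at all; the corollary is therefore sharp in requiring the zero-dimensional hypothesis.
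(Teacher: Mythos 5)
Your proposal is correct and matches the paper's approach exactly: the paper also deduces the corollary directly from \cref{thm:extremal}, omitting further detail. Your additional remark justifying the order-compatible embedding $X \subset \R$ (via embedding into the Cantor set) is a reasonable elaboration of the standing setup the paper fixes at the start of the section, but it does not change the argument.
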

\begin{proof}
We omit the proof because it is evident from \cref{thm:extremal}.
\end{proof}

We introduce the following definition because the basic sets that
 are obtained in \cref{thm:extremal} have some particular properties.
\begin{defn}\label{defn:extremal}
Let $(X,f)$ be an invertible zero-dimensional system.
We say that a basic set $\inf_f$ that is obtained in the manner of \cref{thm:extremal} is \textit{extremal}.
\end{defn}

\begin{thm}\label{thm:extremalunique}
If $B$ is an extremal basic set and $M \in \Mcal_f$, then
 $\abs{B \cap M} = 1$.
\end{thm}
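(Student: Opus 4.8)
The plan is to combine two facts already established for the extremal basic set $B = \inf_f$: first, that every point of $\inf_f$ is fixed under the operation $\inf_f(\cdot)$, by \cref{lem:basicinf}; and second, a direct computation of $\inf_f(\cdot)$ on a minimal set. Since $B$ is a basic set by \cref{thm:extremal}, we already have $B \cap M \nekuu$, so the content is the sharper claim $\abs{B \cap M} \le 1$.

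First I would exploit minimality to show that $\inf_f(\cdot)$ is constant on $M$. Because $M$ is minimal, every orbit in $M$ is dense in $M$, so $\clos{O(z)} = M$ for every $z \in M$. As $M$ is a closed subset of the compact space $X \subset \R$, it is compact, so $m := \inf M$ is attained and $m \in M$. Using the already-recorded properties $\inf_f(z) \le y$ for all $y \in O(z)$ and $\inf_f(z) \in \clos{O(z)} = M$, together with the density of $O(z)$ in $M$, one gets $\inf_f(z) = \min M = m$ for every $z \in M$, independent of the choice of $z$. In particular, picking any $z \in M$ yields $m = \inf_f(z) \in \inf_f = B$, so $m \in B \cap M$.

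For the reverse direction I would take an arbitrary $w \in B \cap M$. Since $w \in \inf_f$, \cref{lem:basicinf} gives $\inf_f(w) = w$; but $w \in M$, so the computation above forces $\inf_f(w) = m$. Hence $w = m$, which shows $B \cap M = \seb m \sen$ and therefore $\abs{B \cap M} = 1$, as required.

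The only step demanding any care is the identity $\inf O(z) = \inf M$ for $z \in M$, i.e.\ passing from the orbit to its closure; this is where minimality (density of orbits) and compactness of $M$ (so that $\inf M = \min M \in M$) enter. This is not a genuine obstacle, however, since the structural work was already done in proving that $\inf_f$ is a basic set and in \cref{lem:basicinf}; the argument here is essentially a two-line consequence of those results.
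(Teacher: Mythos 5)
Your proof is correct and follows essentially the same route as the paper: the paper's argument is the one-line chain $x=\inf_f(x)=\inf(\clos{O(x)})=\inf(M)=\inf(\clos{O(y)})=\inf_f(y)=y$ for $x,y\in B\cap M$, which is exactly your combination of \cref{lem:basicinf} with the observation that $\inf_f(\cdot)$ is constant (equal to $\inf M$) on a minimal set. Your additional remark that $\inf M=\min M\in M$ just re-derives the nonemptiness of $B\cap M$, which both you and the paper already get from $B$ being a basic set.
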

\begin{proof}
Let $B$ be an extremal basic set and $M \in \Mcal_f$.
In particular, $X$ is linearly ordered, and the order topology
 coincides with $X$ and $B = \inf_f$.
Let $x,y \in B \cap M$.
Then, it follows that
 $x = \inf_f(x) = \inf(\clos{O(x)})
 = \inf(M) = \inf(\clos{O(y)}) = \inf_f(y) = y$, as desired.
\end{proof}

\begin{rem}
\cref{thm:extremalunique} implies that there exists a minimal basic set
 that is not extremal (see \cref{example:minimalbs2pt}).
\end{rem}

\begin{cor}\label{cor:extremaluniqueminimal}
Let $B$ be an extremal basic set.
For each $M \in \Mcal$, let $x_M$ be a unique point in $B \cap M$.
Let $B' := \overline{\seb x_M : M \in \Mcal \sen}$.
Then, $B'$ is the unique minimal basic set in $B$.
In particular, $B$ has the unique minimal basic set.
\end{cor}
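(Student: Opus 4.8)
The plan is to build everything on two results already established: \cref{lem:pointsfromms}, which guarantees that choosing one point from each $B \cap M$ and taking the closure produces a basic set inside $B$, and \cref{thm:extremalunique}, which tells us that for an extremal $B$ the intersection $B \cap M$ consists of a single point for every $M \in \Mcal_f$. First I would observe that because $B$ is extremal, there is in fact no choice to make: $x_M$ is the unique point of $B \cap M$, so $B' = \clos{\seb x_M : M \in \Mcal_f \sen}$ is unambiguously defined, and \cref{lem:pointsfromms} immediately yields that $B'$ is a basic set with $B' \subseteq B$.

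The central observation, from which both minimality and uniqueness will follow, is this: \emph{every} basic set $C \subseteq B$ satisfies $C \cap M = \seb x_M \sen$ for every $M \in \Mcal_f$. Indeed, by \cref{bs:minimal} of \cref{prop:bs} a basic set meets every minimal set, so $C \cap M \nekuu$; but $C \cap M \subseteq B \cap M$ together with $\abs{B \cap M} = 1$ forces $C \cap M = B \cap M = \seb x_M \sen$. Hence any basic set contained in $B$ contains all the points $x_M$, and being closed it contains their closure $B'$.

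To prove that $B'$ is minimal, I would apply this observation to an arbitrary basic set $C \subseteq B'$: since $C \subseteq B$ it contains $B'$, while by hypothesis $C \subseteq B'$, so $C = B'$; thus $B'$ admits no proper basic subset and is a minimal basic set. For uniqueness I would apply the same observation to an arbitrary minimal basic set $B'' \subseteq B$: it contains $B'$, and since $B'$ is itself a basic set, the minimality of $B''$ forces $B'' = B'$. The closing ``in particular'' clause is then immediate, since every basic set contains a minimal basic set and we have just shown the only one lying inside $B$ is $B'$.

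I do not anticipate a genuine obstacle: essentially all the content is packaged in \cref{thm:extremalunique}. The single point requiring care is to invoke the quasi-section property (every basic set meets every minimal set) in the correct direction, so that the single-point equality $\abs{B \cap M} = 1$ can be leveraged to pin any basic subset of $B$ down to containing exactly the points $x_M$; everything past that is set inclusion.
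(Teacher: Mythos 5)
Your proposal is correct and follows essentially the same route as the paper: the paper's proof likewise takes an arbitrary minimal basic set $A \subseteq B$, uses \cref{bs:minimal} of \cref{prop:bs} together with $\abs{B \cap M} = 1$ to force $A \cap M = \seb x_M \sen$, and concludes $A = B'$. Your write-up merely makes explicit two steps the paper leaves implicit, namely that $B'$ is a basic set via \cref{lem:pointsfromms} and that $B'$ is itself minimal.
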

\begin{proof}
Let $A$ be a minimal basic set that is contained in $B$.
By \cref{bs:minimal} of \cref{prop:bs}, it follows that 
$A \cap M \nekuu$ for each $M \in \Mcal_f$.
Thus, it follows that $A \cap M = \seb x_M \sen$ for each $M \in \Mcal_f$.
Thus, $A = B'$, as desired.
\end{proof}

\begin{cor}\label{cor:minimaluniqueintersection}
Let $(X,f)$ be an invertible zero-dimensional system.
Then, there exists a minimal basic set $B$ such that
 $\abs{B \cap M} = 1$ for all $M \in \Mcal_f$.
Moreover, if $x_M \in B \cap M$ for each $M \in \Mcal_f$, then
 $B = \clos{\seb x_M \mid M \in \Mcal_f \sen}$.
\end{cor}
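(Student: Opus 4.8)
The plan is to assemble this statement directly from the extremal basic set machinery already established, so that the corollary becomes essentially a repackaging of earlier results. First I would invoke \cref{thm:extremal} to obtain the extremal basic set $\inf_f$, which exists for every zero-dimensional system. By \cref{thm:extremalunique}, this extremal basic set satisfies $\abs{\inf_f \cap M} = 1$ for every $M \in \Mcal_f$; I write $x_M$ for the unique point of $\inf_f \cap M$.

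Next I would pass to a minimal basic set sitting inside $\inf_f$. By \cref{cor:extremaluniqueminimal}, the set $B := \clos{\seb x_M \mid M \in \Mcal_f \sen}$ is the unique minimal basic set contained in $\inf_f$. The key point to verify is that $B$ still meets every minimal set in exactly one point. The upper bound is immediate: since $B \subseteq \inf_f$, for each $M \in \Mcal_f$ we have $B \cap M \subseteq \inf_f \cap M = \seb x_M \sen$, whence $\abs{B \cap M} \le 1$. For the lower bound I would use that $B$, being a basic set, is in particular a quasi-section; hence by \cref{qs:minimal} of \cref{prop:qs} we get $B \cap M \nekuu$ for every $M \in \Mcal_f$. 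Combining the two bounds yields $\abs{B \cap M} = 1$ for all $M \in \Mcal_f$, which is the first assertion.

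Finally, the ``moreover'' clause follows from \cref{lem:pointsfromms}: since $B$ is a minimal basic set, any choice of points $x_M \in B \cap M$ yields $B = \clos{\seb x_M \mid M \in \Mcal_f \sen}$. Because the previous step shows $\abs{B \cap M} = 1$, each $x_M$ is in fact forced to be the single point of $B \cap M$, so the representation is canonical and independent of the choice.

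I do not expect a genuine obstacle, as every step reduces to \cref{thm:extremal,thm:extremalunique,cor:extremaluniqueminimal,lem:pointsfromms}. The only subtlety worth flagging is that passing from the extremal basic set $\inf_f$ to its minimal basic subset $B$ could \emph{a priori} destroy the ``exactly one'' intersection property by shrinking $B \cap M$ to the empty set; this is precisely what the quasi-section argument in the middle paragraph rules out, and it is the one place where I would take care to cite \cref{prop:qs} explicitly rather than treat the nonemptiness as evident.
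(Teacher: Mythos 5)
Your proposal is correct and follows exactly the route the paper takes: the paper's own proof is the one-line statement that the corollary follows by combining \cref{thm:extremal}, \cref{thm:extremalunique}, and \cref{cor:extremaluniqueminimal}, and you have simply written out those details (including the worthwhile observation that the nonemptiness of $B \cap M$ after passing to the minimal basic set is guaranteed by the quasi-section property). No gaps.
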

\begin{proof}
The combination of \cref{thm:extremal,thm:extremalunique}
 and \cref{cor:extremaluniqueminimal}
leads to the statements of this corollary.
\end{proof}

An extremal basic set need not be minimal, as the following example shows.

\begin{example}\label{example:non-minimal-extream}
We show that there exists a zero-dimensional system $(X,f)$
 such that if $B$ is an extremal basic set,
 $B$ is not minimal.
Let us arbitrarily fix an embedding of $X$ into $\R$ with
 respect to which $B$ is the extremal basic set, i.e.,
 $B = \inf_f$.
Suppose that $(X,f)$ contains two fixed points $p_1,p_2$
 and $\seb \seb p_1 \sen, \seb p_2 \sen \sen = \Mcal_f$.
In particular, it follows that $B_0 := \seb p_1, p_2\sen$ is
 the only minimal basic set.
Let us assume that there exist sequences
 $x_{i,n} \to p_i$ ($i = 1,2$)
 of points of $X$
 such that, for all $n = 1,2, \dotsc$, it follows that
 $\alpha(x_{1,n}) = \omega(x_{1,n}) = p_2$ and
 $\alpha(x_{2,n}) = \omega(x_{2,n}) = p_1$.
It is self-evident that such a system exists.
In this system, one of the $p_i$s is less than the other.
Without loss of generality, let us assume that $p_2 < p_1$.
For all sufficiently large $n$, we obtain that $x_{2,n} < p_1$.
Thus, $\inf_f(x_{2,n}) < p_1$ for such $n$s.
For the orbit $O(x_{2,n})$,
 there exists a sole accumulation point $p_1$.
It follows that $\inf_f(x_{2,n}) \in O(x_{2,n})$ for such $n$s.
Thus, we obtain $\inf_f(x_{2,n}) \in B \cap O(x_{2,n})$ and
 $\inf_f(x_{2,n}) \notin B_0$.
This shows that $B$ is not minimal, as desired.
\end{example}

Summarizing the argument above, we obtain the following. 

\begin{thm}\label{thm:main}
Let $(X,f)$ be an invertible zero-dimensional system.
Suppose that $(X,f)$ is densely aperiodic.
Then, there exists a minimal continuously decisive basic set $B$
 such that $\abs{M \cap B} = 1$ for each $M \in \Mcal_f$.
\end{thm}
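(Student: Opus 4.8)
The plan is to lean on the extremal construction to produce a candidate $B$ and then to verify the single property that is not already guaranteed, namely continuous decisiveness. Two of the three requirements will come essentially for free from the results already established, and the hypothesis of dense aperiodicity will be needed only for the third.

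First I would apply \cref{cor:minimaluniqueintersection} to obtain a minimal basic set $B$ for which $\abs{B \cap M} = 1$ for every $M \in \Mcal_f$, and for which $B = \clos{\seb x_M \mid M \in \Mcal_f \sen}$, where $x_M$ denotes the unique point of $B \cap M$. This immediately delivers two of the three desired properties: $B$ is a minimal basic set, and it meets each minimal set in exactly one point. Observe that this step does not use dense aperiodicity.

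Next I would record the inclusion $B \subseteq M_f$. Each $x_M$ lies in $M \subseteq M_f$, and since $M_f = \clos{\bigcup_{M \in \Mcal_f} M}$ is closed, the closure $B = \clos{\seb x_M \mid M \in \Mcal_f \sen}$ is contained in $M_f$. With this in hand, continuous decisiveness follows by a short closure argument. By \cref{lem:intBnominimal}, which is exactly where dense aperiodicity is used, the basic set $B$ satisfies $\textup{int}B \cap M \iskuu$ for every $M \in \Mcal_f$, so the open set $\textup{int}B$ is disjoint from $\bigcup_{M \in \Mcal_f} M$. At the same time $\textup{int}B \subseteq B \subseteq M_f = \clos{\bigcup_{M \in \Mcal_f} M}$. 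Were $\textup{int}B$ nonempty, any one of its points would belong to the closure of $\bigcup_{M \in \Mcal_f} M$ and yet possess the neighborhood $\textup{int}B$ disjoint from that union, a contradiction; hence $\textup{int}B \iskuu$, and $B$ is continuously decisive.

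I do not expect a genuine obstacle here, since the essential content is distributed among the preceding results and the proof is largely a matter of assembling them. The one point requiring care is that a minimal basic set need not be a minimal quasi-section, so \cref{prop:minimalqscontdic} cannot be invoked to obtain decisiveness directly. The decisiveness must instead be drawn from the wandering behaviour on $\textup{int}B$ captured by \cref{lem:intBwander,lem:intBnominimal}, combined with the containment $B \subseteq M_f$; this is precisely the role played by the closure argument above.
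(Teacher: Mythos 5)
Your proof is correct, and its first step coincides with the paper's: both obtain $B$ from \cref{cor:minimaluniqueintersection}. Where you diverge is in establishing continuous decisiveness. The paper simply invokes \cref{prop:minimalqscontdic}, observing that $B$ is minimal as a quasi-section; you instead combine \cref{lem:intBnominimal} with the containment $B \subseteq M_f$ (extracted from the closure description $B = \clos{\seb x_M \mid M \in \Mcal_f \sen}$) to rule out a nonempty interior. That closure argument is valid and self-contained. However, your stated reason for avoiding the paper's route is mistaken: you claim that a minimal basic set need not be a minimal quasi-section, but in fact it must be. If $A \subseteq B$ is a quasi-section and $B$ is a basic set, then $\abs{O(x) \cap A} \le \abs{O(x) \cap B} \le 1$ for every $x \in X$, so $A$ is itself a basic set; minimality of $B$ among basic sets then forces $A = B$, i.e., $B$ has no proper quasi-section inside it and is therefore a minimal quasi-section. (What fails is the converse --- a minimal quasi-section need not be a basic set, as the remark following \cref{example:minimalbs2pt} shows --- and you appear to have reversed that implication.) So \cref{prop:minimalqscontdic} is applicable after all and the paper's route is the shorter one; your version trades that one-line observation for a direct appeal to \cref{lem:intBnominimal}, which arguably makes the role of dense aperiodicity more visible but is not forced on you.
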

\begin{proof}
By \cref{cor:minimaluniqueintersection}, there exists a minimal basic set
 $B$ such that $\abs{B \cap M} = 1$ for all $M \in \Mcal_f$.
By \cref{prop:minimalqscontdic},
 such a $B$ is continuously decisive because it is minimal as a quasi-section.
\end{proof}

Quasi-sections and basic sets might have been
 byproducts of the Bratteli--Vershik
 models.
However, for densely aperiodic zero-dimensional systems,
 a continuously decisive quasi-section or such a basic set
 determines the equivalence class of ordered Bratteli
 diagrams.
Although we could not present any applications in the present paper,
 the existence of basic sets that satisfy the condition of \cref{thm:main}
 must be related to some good properties of zero-dimensional systems,
 which merits further studies.

\vspace{5mm}

\noindent
\textsc{Acknowledgments:}
This work was partially supported by JSPS KAKENHI (Grant Number 20K03643).

\providecommand{\bysame}{\leavevmode\hbox to3em{\hrulefill}\thinspace}
\providecommand{\MR}{\relax\ifhmode\unskip\space\fi MR }
\providecommand{\MRhref}[2]{%
  \href{http://www.ams.org/mathscinet-getitem?mr=#1}{#2}
}
\providecommand{\href}[2]{#2}

%

\begin{thebibliography}{Shi20b}

\bibitem[DK18]{DownarowiczKarpel_2018DynamicsInDimensionZeroASurvey}
T.~Downarowicz, O.~Karpel, Dynamics in dimension zero. A survey,
  Discrete Contin. Dyn. Syst. 38 (2018) 1033--1062.

\bibitem[DK19]{DownarowiczKarpel_2019DecisiveBratteliVershikmodels}
\bysame, Decisive Bratteli--Vershik models, Stud. Math. 247
  (2019) 251--271.

\bibitem[M06]{Medynets_2006CantorAperSysBratDiag}
K.~Medynets, Cantor aperiodic systems and Bratteli diagrams, C. R.
  Acad. Sci. Paris, Ser. I 342 (2006), no.~1, 43--46.

\bibitem[P90]{Poon1990AFSubAlgebrasOfCertainCrossedProducts}
Y.~T. Poon, AF subalgebras of certain crossed products, Rocky Mountain
  J. Math. 20 (1990), no.~2, 529--539.

\bibitem[S20a]{Shimomura_2020AiMBratteliVershikModelsAndGraphCoveringModels}
T.~Shimomura, Bratteli--Vershik models and graph covering models,
  Adv. Math. 367 (2020) 107127.

\bibitem[S20b]{Shimomura_2020ArXivBratteliVershikModelsFromBasicSets}
\bysame, Bratteli--Vershik models from basic sets,
  arXiv:2010.02617v3 [math.DS] 11 Nov 2020 (2020).

\end{thebibliography}
\end{document}